\numberwithin{equation}{section} \textwidth=17.5cm
\newtheorem{theorem}{Theorem}[section]
\newtheorem{lemma}[theorem]{Lemma}
\newtheorem{problem}[theorem]{Problem}
\numberwithin{equation}{section}
\begin{document}

\title[2-Local derivations on  matrix algebras] {2-Local derivations
on  matrix algebras over semi-prime Banach  algebras and on $AW^\ast$-algebras }

\author[Shavkat Ayupov]{Shavkat Ayupov$^1$}

\address{%
$^1$Dormon yoli 29 \\
Institute of
 Mathematics  \\
 National University of
Uzbekistan\\
 100125  Tashkent\\   Uzbekistan\\
 and
 the Abdus Salam \\
 International Centre\\
 for Theoretical Physics (ICTP)\\
  Trieste, Italy}

\email{sh$_{-}$ayupov@mail.ru}

\author[Karimbergen Kudaybergenov]{Karimbergen Kudaybergenov$^2$}

\address{%
$^2$Ch. Abdirov 1 \\
Department of Mathematics\\
 Karakalpak state university \\
 Nukus
230113, Uzbekistan}

 \email{karim2006@mail.ru}

\begin{abstract} The paper is devoted to  2-local derivations on matrix algebras over unital
semi-prime Banach algebras. For     a unital
semi-prime Banach algebra $\mathcal{A}$ with the inner derivation property we
prove that any 2-local derivation on the algebra
$M_{2^n}(\mathcal{A}),$ $n\geq 2,$ is a derivation. We apply this
result to $AW^\ast$-algebras and show that any 2-local derivation
on an arbitrary $AW^\ast$-algebra is a derivation.

{\it Keywords:} matrix algebra; $AW^\ast$-algebra; derivation; inner derivation property;
$2$-local derivation.
\\

{\it AMS Subject Classification:} 46L57; 47B47; 47C15.
\end{abstract}

\maketitle

\section{Introduction}

Given an algebra $\mathcal{A},$ a linear operator
$D:\mathcal{A}\rightarrow \mathcal{A}$ is called a
\textit{derivation}, if $D(xy)=D(x)y+xD(y)$ for all $x, y\in
\mathcal{A}$ (the Leibniz rule). Each element $a\in \mathcal{A}$
implements a derivation $D_a$ on $\mathcal{A}$ defined as
$D_a(x)=\mbox{ad}(a)(x)=ax-xa,$ $x\in \mathcal{A}.$ Such
derivations $D_a$ are said to be \textit{inner derivations}.

In 1997, P. Semrl \cite{Semrl97}  introduced the concepts of
2-local derivations and $2$-local automorphisms. Recall that a map
$\Delta:\mathcal{A}\rightarrow\mathcal{A}$  (not linear in
general) is called a
 $2$-\emph{local derivation} if  for every $x, y\in \mathcal{A},$  there exists
 a derivation $D_{x, y}:\mathcal{A}\rightarrow\mathcal{A}$
such that $\Delta(x)=D_{x, y}(x)$  and $\Delta(y)=D_{x, y}(y).$ In
particular, he has described $2$-local derivations on the algebra
$B(H)$ of all bounded linear operators on the infinite-dimensional
separable Hilbert space~$H.$ In \cite[Remark]{Semrl97}  P.
\v{S}emrl have wrote that the same results hold also in the case
that $H$ is finite-dimensional. In this case, however, he was only
able to get a long proof involving tedious computations, and so,
he  did not  include these results. In \cite{KimKim04} S.O.Kim and
J.S.Kim  gave a short proof that every 2-local derivation  on a
finite-dimensional complex matrix algebras is a derivation. The
methods of the proofs  above mentioned results from
~\cite{KimKim04} and \cite{Semrl97} are essentially based on the
fact that the algebra $B(H)$  is generated by two elements for
separable Hilbert space $H.$ Later J.~H.~Zhang and H.~X.~Li
\cite{Zhang} have extended the above mentioned result of
\cite{KimKim04} for arbitrary symmetric
 digraph matrix algebras  and constructed  an example of  $2$-local derivation
 which is not a derivation on  the algebra
of all upper triangular complex $2\times 2$-matrices.

As it was mentioned above, the proofs of the papers
\cite{KimKim04} and \cite{Semrl97}  are essentially based on the
fact that the algebra $B(H)$  is generated by two elements for
separable Hilbert space $H.$  Since the algebra $B(H)$  is  not
generated by two elements for  non separable $H$,  one cannot
directly apply the methods of the above papers in this case. In
\cite{AyuKuday2012} the authors suggested a new technique and have
generalized the above mentioned results of \cite{KimKim04} and
\cite{Semrl97} for arbitrary Hilbert spaces. Namely, we considered
$2$-local derivations on the algebra $B(H)$ of all bounded linear
operators on an arbitrary (no separability is assumed) Hilbert
space $H$ and proved that every $2$-local derivation on $B(H)$ is
a derivation. A similar result for $2$-local derivations on finite
von Neumann algebras was obtained in \cite{AKNA}. In \cite{AA2}
the authors extended all above results and give a short proof of
this result for arbitrary semi-finite von Neumann algebras.
Finally, in  \cite{AyuKuday2014}, using the analogue of Gleason
Theorem for signed measures, we have extended  this result to type
III von Neumann algebras. This  implies that on arbitrary
 von Neumann algebra each $2$-local derivation is a derivation.

In the present  paper we consider 2-local derivations on matrix
algebras over unital semi-prime Banach algebras. Let $\mathcal{A}$
be  a unital semi-prime Banach algebra with the inner derivation
property. We prove that any 2-local derivation on the algebra
$M_{2^n}(\mathcal{A}),$ $n\geq 2,$ is a derivation. We also apply
this result to $AW^\ast$-algebras and prove that any
2-local derivation on an arbitrary $AW^\ast$-algebra is a
derivation.

\medskip

\section{2-local derivations on matrix algebras}

\medskip

If  $\Delta :\mathcal{A}\rightarrow \mathcal{A}$ is  a 2-local
derivation, then from the definition it easily follows that
$\Delta$ is homogenous. At the same time,
\begin{equation}\label{joor}
 \Delta(x^2)=\Delta(x)x+x\Delta(x)
\end{equation}
for each $x\in \mathcal{A}.$

In \cite{Bre} it is proved that any Jordan derivation (i.e. a
linear map satisfying the above equation) on a semi-prime algebra
is a derivation. So, in the  case semi-prime algebras  in order to
prove that a 2-local derivation $\Delta : \mathcal{A}\rightarrow
\mathcal{A}$ is a derivation it is sufficient to prove that
$\Delta: \mathcal{A} \rightarrow \mathcal{A}$ is additive.

We say that an algebra $\mathcal{A}$ has the \textit{inner
derivation property} if every derivation on $\mathcal{A}$ is
inner. Recall that an algebra $\mathcal{A}$ is said to be
\textit{semi-prime} if $a\mathcal{A}a=0$ implies that $a=0.$

The following theorem is the main result of this section.

\begin{theorem}\label{mainlocal}
Let $\mathcal{A}$ be  a unital semi-prime Banach algebra with the
inner derivation property and let $M_{2^n}(\mathcal{A})$ be the
algebra of  $2^n \times 2^n$-matrices over $\mathcal{A}.$ Then any
2-local derivation $\Delta$ on $M_{2^n}(\mathcal{A})$ is a
derivation.
\end{theorem}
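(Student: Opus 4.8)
The plan is to reduce the problem to additivity and then exploit the matrix structure. First I would record two preliminary facts. Since $\mathcal{A}$ is semi-prime, the matrix algebra $\mathcal{M}:=M_{2^n}(\mathcal{A})$ is again semi-prime; and since $\mathcal{A}$ has the inner derivation property, so does $\mathcal{M}$, because any derivation of $M_N(\mathcal{A})$ splits as an entrywise derivation plus an inner part, and an entrywise inner derivation $\mathrm{ad}(b)$ on $\mathcal{A}$ lifts to $\mathrm{ad}(\mathrm{diag}(b,\dots,b))$ on $\mathcal{M}$. Consequently every local derivation $D_{x,y}$ produced by the definition of $\Delta$ is inner, say $D_{x,y}=D_{a_{x,y}}$ for some $a_{x,y}\in\mathcal{M}$, so that $\Delta(x)=a_{x,y}x-xa_{x,y}$ and $\Delta(y)=a_{x,y}y-ya_{x,y}$. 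By~\eqref{joor} and the theorem of Bre\v{s}ar quoted above, it now suffices to prove that $\Delta$ is additive.

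To this end I would first normalize $\Delta$. Let $\{e_{ij}\}$ be the standard matrix units of $\mathcal{M}$ and fix a diagonal element $d=\sum_{i}\lambda_i e_{ii}$ whose scalar entries $\lambda_i\in\mathbb{C}\cdot 1_{\mathcal{A}}$ are pairwise distinct. Subtracting from $\Delta$ the inner derivation $D_a$ that agrees with it at $d$, I may assume without loss of generality that $\Delta(d)=0$; the difference is again a $2$-local derivation, and it is a derivation if and only if the original one is. The relation $\Delta(d)=0$, applied through the local derivations of the pairs $(d,x)$, forces the implementing elements to commute with $d$ in the relevant components and thus pins down the values $\Delta(e_{ij})$. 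The next step is to show that in fact $\Delta(e_{ij})=0$ for all $i,j$, and more generally $\Delta(a\,e_{ij})=0$ for every $a\in\mathcal{A}$, by testing $\Delta$ on well-chosen pairs of matrix units and using semi-primeness to discard the ambiguity in the choice of $a_{x,y}$.

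With the matrix units under control, additivity follows by writing an arbitrary $x\in\mathcal{M}$ as $x=\sum_{i,j}x_{ij}e_{ij}$ and comparing $\Delta(x+y)$ with $\Delta(x)+\Delta(y)$ block by block, repeatedly invoking the $2$-local property to reduce each comparison to the already-established vanishing relations together with~\eqref{joor}. Here the decomposition $\mathcal{M}=M_2\!\left(M_{2^{n-1}}(\mathcal{A})\right)$ with $n\geq 2$ is what makes the bookkeeping close up: the even $2\times 2$ block structure over a coefficient algebra that is itself a noncommutative matrix algebra gives enough room to separate the off-diagonal contributions, in contrast with the genuinely singular $2\times 2$ upper-triangular example where additivity can fail.

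I expect the main obstacle to be precisely this additivity argument on the off-diagonal entries. Showing $\Delta(x+y)=\Delta(x)+\Delta(y)$ requires selecting, for each pair, an auxiliary element against which the $2$-local property yields a common implementing $a_{x,y}$ that simultaneously controls $x$, $y$ and $x+y$; arranging these choices coherently across all blocks, while absorbing the non-uniqueness of $a_{x,y}$ modulo the centralizer via semi-primeness, is the delicate computational core. Once additivity is secured, \eqref{joor} and Bre\v{s}ar's theorem finish the proof.
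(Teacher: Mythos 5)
Your opening reductions are sound and match the paper's first moves: $M_N(\mathcal{A})$ is semi-prime, Lemma~\ref{idp} is exactly your lifting argument for the inner derivation property, and the Bre\v{s}ar reduction of the problem to additivity is the paper's starting observation. The first genuine error appears in your normalization. Subtracting an inner derivation that agrees with $\Delta$ at a single diagonal element $d$ with distinct scalar entries does \emph{not} pin down the values $\Delta(e_{i,j})$: if $h$ implements $\Delta$ at the pair $(e_{i,j},d)$, then $h$ is diagonal and $\Delta(e_{i,j})=(h_{i,i}-h_{j,j})e_{i,j}$, which need not vanish. The paper's Lemma~\ref{lemmatwo} has to test against \emph{two} elements simultaneously --- the diagonal $u$ and the shift $v$ of~\eqref{vv} --- because commuting with $v$ forces the implementing element into the Toeplitz form~\eqref{trian}, whose commutator with $e_{i,j}$ has vanishing $(i,j)$-entry; only the conjunction of the two constraints yields $\Delta(e_{i,j})=0$. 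Since the $2$-local property hands you a derivation agreeing at two points at once, the normalization must be performed against the pair $(u,v)$, not against $d$ alone.

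The larger gap is that your central claims --- $\Delta(a\,e_{i,j})=0$ for every $a\in\mathcal{A}$, and the subsequent block-by-block additivity --- are asserted, not proved, and they constitute essentially the whole theorem. After the first normalization $\Delta$ vanishes only on $M_n(Z(\mathcal{A}))$; to kill it on the diagonal subalgebra $\mathcal{D}_n$ one must prove additivity of each restriction $\Delta_{i,i}$ (the paper's Lemma~\ref{six}, whose three-index trick requires matrix size at least $3$ --- one reason the hypothesis is $M_{2^n}$ with $n\geq 2$), then invoke Bre\v{s}ar's theorem and the inner derivation property to subtract a second inner derivation $\mathrm{ad}(\tilde{a})$ as in Lemma~\ref{adj}. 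Even after that, propagating the vanishing from $\mathcal{D}_{2^n}$ and $\mbox{sp}\{e_{i,j}\}$ to the whole algebra is where the Banach hypothesis is indispensable: the paper's Lemmata~\ref{ss}--\ref{mmm} repeatedly replace $x$ by $\mathbf{1}+x$ with $\|x_{1,1}\|<1$ so that $e+x_{1,1}$ is invertible, which forces implementing elements into diagonal form, and then the general case follows by induction through $M_{2^n}(\mathcal{A})\cong M_2\left(M_{2^{n-1}}(\mathcal{A})\right)$. Your sketch never uses invertibility or the norm at all, although the paper notes this is the \emph{only} place the Banach structure enters (and poses its removal as an open problem); semi-primeness by itself gives no way to discard the ambiguity in $a_{x,y}$ on off-diagonal entries. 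Your closing paragraph in effect concedes this: the ``delicate computational core'' you defer is precisely the part of the proof that is missing, so the proposal as it stands does not establish the theorem.
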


The proof  of Theorem~\ref{mainlocal} consists of two steps. In
the  first step  we shall show additivity of  $\Delta$ on the
 the subalgebra of diagonal matrices from $M_{2^n}(\mathcal{A}).$

Let $M_{n}(\mathcal{A})$ be the algebra of  $n \times n$-matrices
over $\mathcal{A}$ and let  $\{e_{i,j}\}_{i,j=1}^n$ be  the system
of matrix units in $M_n(\mathcal{A}).$ For $x \in
M_n(\mathcal{A})$ by $x_{i,j}$ we denote the $(i, j)$-entry of
$x,$ i.e. $x_{i,j}=e_{i,i}xe_{j,j},$ where $1 \leq i, j \leq n.$
We shall, when necessary, identify this element with the matrix from
$M_n(\mathcal{A})$ whose $(i,j)$-entry is $x_{i,j},$ other entries are
zero.

Further in Lemmata~\ref{idp}--\ref{lemmafive} we assume that
$n\geq 2.$

\begin{lemma}\label{idp}
Let $\mathcal{A}$ be  a unital  Banach algebra with  the inner
derivation property. Then the algebra $M_{n}(\mathcal{A})$ also
has the inner derivation property.
\end{lemma}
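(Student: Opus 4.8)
The plan is to show that an arbitrary derivation $D$ on $M_n(\mathcal{A})$ differs from an inner derivation by a derivation that acts entrywise, and then to invoke the inner derivation property of $\mathcal{A}$ itself. Throughout I use the matrix units $\{e_{i,j}\}$ and the embedding $a\mapsto aI_n$ of $\mathcal{A}$ as scalar matrices, noting that each $aI_n$ commutes with every $e_{i,j}$ and that every $x\in M_n(\mathcal{A})$ can be written as $x=\sum_{i,j}(x_{i,j}I_n)e_{i,j}$, where $x_{i,j}\in\mathcal{A}$ is the $(i,j)$-entry.

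First step: straighten $D$ on the matrix units. I would set $a=\sum_{k=1}^n D(e_{k,1})e_{1,k}$ and claim that $\mathrm{ad}(a)$ agrees with $D$ on every $e_{p,q}$. Computing $ae_{p,q}=D(e_{p,1})e_{1,q}$ is immediate, while for $e_{p,q}a$ I would rewrite $e_{p,q}D(e_{k,1})=D(e_{p,q}e_{k,1})-D(e_{p,q})e_{k,1}$ via Leibniz; using $e_{p,q}e_{k,1}=\delta_{q,k}e_{p,1}$ together with the telescoping identity $\sum_k e_{k,1}e_{1,k}=\sum_k e_{k,k}=I_n$ and $D(I_n)=0$ yields $e_{p,q}a=D(e_{p,1})e_{1,q}-D(e_{p,q})$, hence $\mathrm{ad}(a)(e_{p,q})=D(e_{p,q})$. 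Consequently $\widetilde{D}:=D-\mathrm{ad}(a)$ is a derivation vanishing on all matrix units.

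Second step: identify $\widetilde{D}$ as entrywise. For $a\in\mathcal{A}$ the element $aI_n$ commutes with each $e_{i,j}$, so applying $\widetilde{D}$ to $aI_n\cdot e_{i,j}-e_{i,j}\cdot aI_n=0$ and using $\widetilde{D}(e_{i,j})=0$ shows that $\widetilde{D}(aI_n)$ commutes with all matrix units; since the commutant of the matrix units inside $M_n(\mathcal{A})$ is exactly the scalar matrices $\mathcal{A}I_n$, we obtain $\widetilde{D}(aI_n)=d(a)I_n$ for a well-defined map $d:\mathcal{A}\to\mathcal{A}$. Linearity and the Leibniz rule for $d$ follow directly from those of $\widetilde{D}$ on the scalar copy of $\mathcal{A}$, so $d$ is a derivation of $\mathcal{A}$. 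By hypothesis $d=\mathrm{ad}(c)$ for some $c\in\mathcal{A}$, whence $\widetilde{D}(aI_n)=\mathrm{ad}(cI_n)(aI_n)$.

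Finally, since $\widetilde{D}$ kills the matrix units and $cI_n$ commutes with them, expanding $x=\sum_{i,j}(x_{i,j}I_n)e_{i,j}$ and applying Leibniz to both $\widetilde{D}$ and $\mathrm{ad}(cI_n)$ gives $\widetilde{D}(x)=\sum_{i,j}\widetilde{D}(x_{i,j}I_n)e_{i,j}=\sum_{i,j}\mathrm{ad}(cI_n)(x_{i,j}I_n)e_{i,j}=\mathrm{ad}(cI_n)(x)$. Therefore $D=\mathrm{ad}(a)+\mathrm{ad}(cI_n)=\mathrm{ad}(a+cI_n)$ is inner, proving that $M_n(\mathcal{A})$ has the inner derivation property. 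I expect the main obstacle to be the first step, namely verifying that the explicit element $a$ reproduces $D$ on the matrix units, which hinges on the Leibniz rewriting and the telescoping sum; once $\widetilde{D}$ is known to vanish on matrix units, the reduction to a derivation of $\mathcal{A}$ and the passage to innerness are routine.
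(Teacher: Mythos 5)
Your proof is correct and follows essentially the same two-step route as the paper: you subtract $\mathrm{ad}\bigl(\sum_k D(e_{k,1})e_{1,k}\bigr)$ to make the derivation vanish on all matrix units, then apply the inner derivation property of $\mathcal{A}$ to the induced derivation and implement the remainder by the diagonal matrix with constant entry. The only cosmetic difference is that you extract that induced derivation from the scalar copy $\mathcal{A}I_n$ (the commutant of the matrix units), whereas the paper restricts $D_0$ to the corner $e_{1,1}M_n(\mathcal{A})e_{1,1}$ and verifies $D_0=\mathrm{ad}(b)$ entrywise on elements $e_{k,k}xe_{s,s}$; your implementing element $cI_n$ is precisely the paper's $b=\sum_{i=1}^n e_{i,1}a_{1,1}e_{1,i}$.
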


\begin{proof} Let $D$ be a derivation on $M_{n}(\mathcal{A}).$ Set
$$
a=\sum\limits_{i=1}^n D(e_{i,1})e_{1,i}.
$$

We have
$$
\sum\limits_{i=1}^n D(e_{i,1})e_{1,i}+\sum\limits_{i=1}^n
e_{i,1}D(e_{1,i})=\sum\limits_{i=1}^n
D(e_{i,1}e_{1,i})=\sum\limits_{i=1}^n
D(e_{i,i})=D\left(\sum\limits_{i=1}^n
e_{i,i}\right)=D(\textbf{1})=0,
$$
where $\textbf{1}$ is the unit matrix. Therefore
$a=-\sum\limits_{i=1}^n e_{i,1}D(e_{1,i}). $

From  the above equalities by direct calculations we obtain that
$$
ae_{k,s}-e_{k,s}a=D(e_{k,1})e_{1,s}+e_{k,1}D(e_{1,s})=D(e_{k,s}),
$$
i.e. $D(e_{k,s})=[a, e_{k,s}]$ for all $k,s\in \overline{1,n}.$

Put $D_0=D-\mbox{ad}(a).$ Since $D_0(e_{1,1})=0,$ it follows that
$D_0$ maps $e_{1,1}M_n(\mathcal{A})e_{1,1}\equiv \mathcal{A}$ into
itself. Therefore the restriction $D_0|_{\mathcal{A}}$ of $D_0$
onto $\mathcal{A}$ is a derivation. Since $\mathcal{A}$ has the
inner derivation property there exists an element $a_{1,1}\in
\mathcal{A}$ such that $D_0(x)=[a_{1,1}, x]$ for all $x\in
\mathcal{A}.$

Set $b=\sum\limits_{i=1}^ne_{i,1}a_{1,1}e_{1,i}.$ Let $x$ be a
matrix such that $x=e_{k,k}xe_{s,s}.$ Then
$$
bx-xb=e_{k,1}a_{1,1}e_{1,k}x-xe_{s,1}a_{1,1}e_{1,s}=e_{k,1}[a_{1,1},
e_{1,k}xe_{s,1}]e_{1,s}
$$
and
\begin{eqnarray*}
D_0(x) & = &
D_0(e_{k,1}e_{1,k}xe_{s,1}e_{1,s})=D_0(e_{k,1})e_{1,k}xe_{s,1}e_{1,s}+\\
& + &
e_{k,1}D_0(e_{1,k}xe_{s,1})e_{1,s}+e_{k,1}e_{1,k}xe_{s,1}D_0(e_{1,s})=\\
& = & 0+e_{k,1}D_0(e_{1,k}xe_{s,1})e_{1,s}+0=e_{k,1}[a_{1,1},
e_{1,k}xe_{s,1}]e_{1,s},
\end{eqnarray*}
i.e. $D_0(x)=[b, x]$ for all $x$ of the form $x=e_{k,k}xe_{s,s}.$
By linearity of $D_0$ we have that  $D_0=\mbox{ad}(b).$ So,
$D=\mbox{ad}(a+b).$ The proof is complete.
\end{proof}

Consider the following two matrices:
\begin{equation}\label{vv}
 u=\sum\limits_{i=1}^n \frac{1}{2^i}e_{i,i},\,
v=\sum\limits_{i=2}^n e_{i-1,i}.
\end{equation}

It is easy to see that an element $x \in M_n(\mathcal{A})$
commutes with $u$  if and only if it is diagonal, and if an
element $a$ commutes with $v,$ then $a$ is of the form
\begin{equation}\label{trian}
a=\left( \begin{array}{cccccc}
a_1 & a_2 & a_3 & \ldots & a_n \\
0 & a_1 &  a_2 & \ldots & a_{n-1}\\
0 & 0 &  a_1 & \ldots & a_{n-2}\\
\vdots& \vdots& \vdots &\vdots & \vdots\\
0 & 0 &\ldots & a_1 & a_2\\
0 & 0 &\ldots & 0 & a_1
\end{array} \right).
\end{equation}

A  result, similar to the following one,  was proved in \cite[Lemma
4.4]{AKA} for matrix algebras over commutative regular algebras.

\begin{lemma}\label{lemmatwo}
For every $2$-local derivation $\Delta$ on  $M_n(\mathcal{A}),$
there exists a derivation  $D$   such that
$\Delta|_{M_n(Z(\mathcal{A}))}=D|_{M_n(Z(\mathcal{A}))},$ where
$Z(\mathcal{A})$ is the center of the algebra $\mathcal{A}.$ In
particular,
$\Delta|_{\mbox{sp}\{e_{i,j}\}_{i,j=1}^n}=D|_{\mbox{sp}\{e_{i,j}\}_{i,j=1}^n},$
where $\mbox{sp}\{e_{i,j}\}_{i,j=1}^n$ is the linear span of the
set $\{e_{i,j}\}_{i,j=1}^n.$
\end{lemma}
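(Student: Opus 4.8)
The plan is to produce a single inner derivation $D$ once and for all, and then to show that the difference $\Delta-D$ vanishes on $M_n(Z(\mathcal{A}))$ by testing it, point by point, against the rigid matrices \eqref{vv} and against the matrix units. First I would fix the reference derivation. By Lemma~\ref{idp} every derivation of $M_n(\mathcal{A})$ is inner, so applying the $2$-local property to the pair $(u,v)$ from \eqref{vv} yields an element $a$ with $\Delta(u)=[a,u]$ and $\Delta(v)=[a,v]$; set $D=\mbox{ad}(a)$. The point of this choice is the rigidity recorded before the lemma: an element commutes with $u$ exactly when it is diagonal, and it commutes with $v$ exactly when it has the triangular Toeplitz form \eqref{trian}. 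Consequently, whenever another inner derivation $\mbox{ad}(b)$ agrees with $D$ on $u$ (resp.\ on $v$), the difference $b-a$ is forced to be diagonal (resp.\ of the form \eqref{trian}).

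Second I would settle the matrix units. Fix $e_{k,l}$ and apply the $2$-local property to $(u,e_{k,l})$: the resulting derivation is $\mbox{ad}(a+\delta)$ with $\delta=\mathrm{diag}(\delta_1,\dots,\delta_n)$, so that $\Delta(e_{k,l})=[a,e_{k,l}]+(\delta_k-\delta_l)e_{k,l}$. Applying it instead to $(v,e_{k,l})$ gives $\mbox{ad}(a+\tau)$ with $\tau$ of the form \eqref{trian}, so that $\Delta(e_{k,l})=[a,e_{k,l}]+[\tau,e_{k,l}]$. The decisive computation is that the $(k,l)$-entry of $[\tau,e_{k,l}]$ equals $a_1-a_1=0$ for every such $\tau$; comparing the two expressions in the $(k,l)$-entry therefore forces $\delta_k-\delta_l=0$, whence $\Delta(e_{k,l})=D(e_{k,l})$ for all $k,l$.

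Third, with all $\Delta(e_{i,j})=[a,e_{i,j}]$ in hand, I would treat an arbitrary $z\in M_n(Z(\mathcal{A}))$ one entry at a time. For a fixed pair $(p,q)$ apply the $2$-local property to $(z,e_{q,p})$; since $\Delta(e_{q,p})=[a,e_{q,p}]$, the derivation is $\mbox{ad}(a+\mu)$ with $\mu$ commuting with $e_{q,p}$. The centralizer of $e_{q,p}$ forces row $p$ and column $q$ of $\mu$ to vanish off the diagonal and imposes $\mu_{p,p}=\mu_{q,q}$, so the two sums defining the $(p,q)$-entry of $[\mu,z]$ collapse to single terms and yield $\mu_{p,p}z_{p,q}-z_{p,q}\mu_{q,q}$, which is $0$ because $z_{p,q}$ is central and $\mu_{p,p}=\mu_{q,q}$. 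As $(p,q)$ is arbitrary, $\Delta(z)-[a,z]=0$, that is $\Delta(z)=D(z)$; in particular $\Delta=D$ on $\mbox{sp}\{e_{i,j}\}\subseteq M_n(Z(\mathcal{A}))$, which gives the ``in particular'' assertion.

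I expect the main obstacle to be the second step, where the diagonal ambiguity $\delta_k-\delta_l$ left over from the single generator $u$ must be eliminated: this is exactly where both special matrices \eqref{vv} are needed simultaneously, and it rests on verifying that the triangular Toeplitz centralizer of $v$ contributes nothing to the $(k,l)$-entry of the relevant commutator. Once the matrix units are pinned down, the third step is essentially forced, with the centrality of the entries of $z$ doing all the work.
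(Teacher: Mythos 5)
Your proposal is correct and takes essentially the same route as the paper: you fix $D=\mbox{ad}(a)$ via the pair $(u,v)$ from \eqref{vv} and use the centralizers of $u$ (diagonal matrices) and of $v$ (matrices of the form \eqref{trian}) to force $\Delta=D$ on the matrix units, exactly as in the paper's proof. The only, cosmetic, difference is in the last step: for $z\in M_n(Z(\mathcal{A}))$ you invoke innerness and the centralizer of $e_{q,p}$ to compute $[\mu,z]_{p,q}=0$, whereas the paper pairs $x$ with $e_{i,j}$ and expands $e_{i,j}\Delta(x)e_{i,j}$ by the Leibniz rule; in both versions the centrality of the entries does the same work.
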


\begin{proof}
By Lemma~\ref{idp} there exists an element  $a$ in
$M_n(\mathcal{A})$ such that
$$
\Delta(u)=[a, u],\, \Delta(v)=[a, v],
$$
where $u, v$ are the elements from~\eqref{vv}. Replacing $\Delta$
by $\Delta-\mbox{ad}(a)$, if necessary, we can assume that
$\Delta(u)=\Delta(v)=0.$

Let $i, j\in \overline{1, n}.$ Take   a matrix $h$ such that
$$
\Delta(e_{i,j})=[h, e_{i,j}],\,\Delta(u)=[h, u].
$$
Since $\Delta(u)=0,$  it follows that $h$ has   diagonal form,
i.e. $h=\sum\limits_{i=1}^n h_{i,i}.$ So we have
$$
\Delta(e_{i,j})=he_{i,j}-e_{i,j}h.
$$
In the same way, but starting with the element $v$ instead of $u$, we
obtain
$$
\Delta(e_{i,j})=be_{i,j}-e_{i,j}b,
$$
where  $b$  has the  form~\eqref{trian}, depending on $e_{i,j}.$
So
$$
\Delta(e_{i,j})=he_{i,j}-e_{i,j} h=b e_{i,j}-e_{i,j} b.
$$
Since
$$
he_{i,j} - e_{i,j} h= (h_{i,i}-h_{j,j})e_{i,j}
$$
 and
 $$
[b e_{i,j}- e_{i,j} b]_{i,j}=0,
$$
 it follows that $\Delta(e_{i,j})=0.$

Now let us take a matrix $x=\sum\limits_{i,j=1}^n
f_{i,j}e_{i,j}\in M_n(Z(\mathcal{A})).$ Then
\begin{eqnarray*}
e_{i,j}\Delta(x)e_{i,j} & = &
e_{i,j} D_{e_{i,j}, x} (x) e_{i,j} =\\
& = & D_{e_{i,j}, x} (e_{i,j} x e_{i,j})-D_{e_{i,j}, x}
(e_{i,j})x e_{i,j}-e_{i,j} x D_{e_{i,j}, x} (e_{i,j})=\\
&=& D_{e_{i,j}, x} (f_{j,i}  e_{i,j}) -0-0=f_{j,i} D_{e_{i,j}, x}
(e_{i,j}) =0,
\end{eqnarray*}
i.e. $ e_{i,j}\Delta(x)e_{i,j}=0$ for all $i,j\in \overline{1,n}.$
This means that $\Delta(x)=0.$ The proof is complete.
\end{proof}

Further in Lemmata~\ref{three}--\ref{adj} we assume that $\Delta$
is a 2-local derivation on the algebra $M_n(\mathcal{A}),$  such
that $\Delta|_{\mbox{sp}\{e_{i,j}\}_{i,j=1}^n}= 0.$

Let $\Delta_{i,j}$ be the restriction of $\Delta$ onto
$\mathcal{M}_{i,j}=e_{i,i}M_n(\mathcal{A})e_{j,j},$ where $1 \leq
i, j \leq n.$

\begin{lemma}\label{three}
$\Delta_{i,j}$ maps $\mathcal{M}_{i,j}$ into itself.
\end{lemma}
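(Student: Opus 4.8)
The plan is to fix an arbitrary $x\in\mathcal{M}_{i,j}$, so that $x=e_{i,i}xe_{j,j}$, and to show that every entry of $\Delta(x)$ lying outside the $(i,j)$-position vanishes. The whole argument rests on a single application of $2$-locality, with the companion element chosen to be the matrix unit $e_{i,j}$ itself. By the definition of a $2$-local derivation there is a derivation $D_{x,e_{i,j}}$ with $\Delta(x)=D_{x,e_{i,j}}(x)$ and $\Delta(e_{i,j})=D_{x,e_{i,j}}(e_{i,j})$; by Lemma~\ref{idp} this derivation is inner, say $D_{x,e_{i,j}}=\mbox{ad}(a)$ for some $a\in M_n(\mathcal{A})$. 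Since by our standing assumption $\Delta|_{\mbox{sp}\{e_{k,l}\}}=0$, the second relation reads $[a,e_{i,j}]=0$, while the first gives $\Delta(x)=[a,x]$.

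Next I would read off what the centralizer relation $[a,e_{i,j}]=0$ says entrywise. Writing $a=\sum_{p,q}a_{p,q}e_{p,q}$ and comparing the $(p,j)$- and $(i,q)$-entries of $ae_{i,j}$ and $e_{i,j}a$, one finds that $a_{p,i}=0$ for all $p\neq i$ and $a_{j,q}=0$ for all $q\neq j$ (together with $a_{i,i}=a_{j,j}$, which will not be needed). In other words, the $i$-th column and the $j$-th row of $a$ are concentrated on the diagonal.

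Finally I would compute $\Delta(x)=[a,x]=ax-xa$ in these coordinates. Because $x=e_{i,i}xe_{j,j}$ has its only nonzero entry $x_{i,j}$ in position $(i,j)$, the product $ax$ is supported in column $j$, with $(p,j)$-entry $a_{p,i}x_{i,j}$, while $xa$ is supported in row $i$, with $(i,q)$-entry $x_{i,j}a_{j,q}$. The vanishing of $a_{p,i}$ for $p\neq i$ and of $a_{j,q}$ for $q\neq j$ then forces both $ax$ and $xa$ to be supported only at $(i,j)$, so that $\Delta(x)=[a,x]\in\mathcal{M}_{i,j}$, which is exactly the assertion.

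The computation is entirely routine; the only genuine decision is the choice of the companion element in the $2$-local relation. Pairing $x$ with $e_{i,j}$ (rather than with a diagonal element, or with the auxiliary matrices $u$ and $v$ used in Lemma~\ref{lemmatwo}) is precisely what makes the condition $[a,e_{i,j}]=0$ annihilate exactly the off-$(i,j)$ part of the commutator $[a,x]$; so I expect the main point to be identifying this companion rather than any delicate estimate. The same computation covers the diagonal case $i=j$ verbatim, with $e_{i,i}$ in place of $e_{i,j}$.
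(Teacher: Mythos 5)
Your proof is correct, but it follows a genuinely different route from the paper's. The paper never passes to inner derivations in this lemma: it applies $2$-locality twice, pairing $x$ with the diagonal projections $e_{i,i}$ and $e_{j,j}$ (whose $\Delta$-images vanish by the standing normalization), and then uses only the Leibniz rule, e.g. $e_{i,i}D_{e_{i,i},x}(x)(\mathbf{1}-e_{i,i})=D_{e_{i,i},x}\bigl(e_{i,i}x(\mathbf{1}-e_{i,i})\bigr)=\Delta(x)$ since $e_{i,i}x(\mathbf{1}-e_{i,i})=x$, together with the symmetric identity $(\mathbf{1}-e_{j,j})\Delta(x)e_{j,j}=\Delta(x)$, and combines the two to get $e_{i,i}\Delta(x)e_{j,j}=\Delta(x)$ (the case $i=j$ being a one-line Leibniz computation). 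You instead apply $2$-locality once, with companion $e_{i,j}$, invoke Lemma~\ref{idp} to write the derivation as $\mbox{ad}(a)$, and read off the centralizer condition $[a,e_{i,j}]=0$ entrywise; your computation is right ($ae_{i,j}=\sum_p a_{p,i}e_{p,j}$ and $e_{i,j}a=\sum_q a_{j,q}e_{i,q}$ do give $a_{p,i}=0$ for $p\neq i$, $a_{j,q}=0$ for $q\neq j$, and $a_{i,i}=a_{j,j}$), it correctly forces $[a,x]$ to be supported at $(i,j)$, and the $i=j$ case does go through verbatim. The trade-off: your argument is a clean one-shot application of $2$-locality and even delivers the explicit form $\Delta(x)=a_{i,i}x-xa_{j,j}$, in the same spirit as the centralizer tricks with $u$ and $v$ in Lemma~\ref{lemmatwo}; but it spends the inner derivation property (via Lemma~\ref{idp}), whereas the paper's Leibniz-only proof makes this lemma valid for any $2$-local derivation vanishing on the matrix units over an arbitrary unital algebra, with no innerness or Banach hypothesis consumed at this point --- a distinction that matters for the open Problem the authors raise at the end of the section.
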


\begin{proof} First, let us show that
\begin{equation}\label{compo}
\Delta_{i,j}(x) =e_{i,i}  \Delta(x) e_{j,j}
\end{equation}
for all $x\in \mathcal{M}_{i,j}.$

Let $x=x_{i,j}\in \mathcal{M}_{i,j}.$ Take a derivation $D$ on
$M_n(\mathcal{A})$ such that
$$
\Delta(x)=D(x),\, \Delta(e_{i,j})=D(e_{i,j}).
$$

It is suffices to  consider the following two cases.

Case 1. Let $i=j.$ Then
\begin{eqnarray*}
\Delta_{i,i}(x) & = &
\Delta(x)=D(x)=D(e_{i,i} x e_{i,i}) =\\
& = & D(e_{i,i}) x e_{i,i}+e_{i,i}D(x)e_{i,i}+e_{i,i} x D(e_{i,i})=\\
&=& 0+e_{i,i} \Delta(x) e_{i,i}+0=e_{i,i}  \Delta(x) e_{i,i}.
\end{eqnarray*}

Case 2. Let $i\neq j.$ Denote by  $\mathbf{1}$ the unit matrix.
Since $e_{i,i}x(\mathbf{1}-e_{i,i})=x$ and
\linebreak $(\mathbf{1}-e_{j,j})xe_{j,j}=x,$ we obtain that
\begin{eqnarray*}
e_{i,i}\Delta(x)(\mathbf{1}-e_{i,i}) & = &
e_{i,i} D_{e_{i,i}, x} (x)(\mathbf{1}-e_{i,i}) =\\
& = & D_{e_{i,i}, x} (e_{i,i} x (\mathbf{1}-e_{i,i}))-D_{e_{i,i},
x} (e_{i,i}) x (\mathbf{1}-e_{i,i})-
e_{i,i} x D_{e_{i,i}, x} (\mathbf{1}-e_{i,i})=\\
&=& D_{e_{i,i}, x} (x)-0-0=\Delta(x)
\end{eqnarray*}
and
\begin{eqnarray*}
(\mathbf{1}-e_{j,j})\Delta(x)e_{j,j} & = &
(\mathbf{1}-e_{j,j}) D_{e_{j,j}, x} (x)e_{j,j} =\\
& = & D_{e_{j,j}, x} ((\mathbf{1}-e_{j,j}) x e_{j,j})-D_{e_{j,j},
x} (\mathbf{1}-e_{j,j}) x
e_{j,j}-(\mathbf{1}-e_{j,j}) x D_{e_{j,j}, x} (e_{j,j})=\\
&=& D_{e_{j,j}, x} (x)-0-0=\Delta(x).
\end{eqnarray*}
Hence
\begin{eqnarray*}
e_{i,i}\Delta(x)e_{j,j} & = & (\mathbf{1}-e_{j,j})
e_{i,i}\Delta(x)(\mathbf{1}-e_{i,i})e_{j,j} =
(\mathbf{1}-e_{j,j})\Delta(x)e_{j,j}=\Delta(x).
\end{eqnarray*}
The proof is complete. \end{proof}

\begin{lemma}\label{lemmafour} Let
$x=\sum\limits_{i=1}^n x_{i,i}$ be a diagonal matrix. Then
$e_{k,k}\Delta(x)e_{k,k}=\Delta(x_{k,k})$ for all $k\in
\overline{1,n}.$
\end{lemma}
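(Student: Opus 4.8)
The plan is to apply the $2$-local property not to the pair $(x,e_{k,k})$ but to the pair $(x,x_{k,k})$, and then to reconcile the resulting derivation with the block information already recorded in Lemma~\ref{three}. So first I would invoke the definition of a $2$-local derivation to produce a single derivation $D$ on $M_n(\mathcal{A})$ satisfying $\Delta(x)=D(x)$ and $\Delta(x_{k,k})=D(x_{k,k})$ simultaneously. The point of matching on $x_{k,k}$ rather than on $e_{k,k}$ is that it lets me convert the value $\Delta(x_{k,k})$ into a statement about $D$, which I can then manipulate with the Leibniz rule; matching only on $e_{k,k}$ would compute $e_{k,k}\Delta(x)e_{k,k}$ as $D(x_{k,k})$ but leave no way to identify $D(x_{k,k})$ with $\Delta(x_{k,k})$.

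Next I would collect two elementary facts. From Lemma~\ref{three}, specialised to $i=j=k$ in \eqref{compo}, the element $x_{k,k}$ lies in $\mathcal{M}_{k,k}$, so that $\Delta(x_{k,k})=e_{k,k}\Delta(x_{k,k})e_{k,k}$. Separately, for any derivation $D$ one has $e_{k,k}D(e_{k,k})e_{k,k}=0$: this drops out upon applying $D$ to $e_{k,k}=e_{k,k}^2$, expanding, and compressing by $e_{k,k}$ on both sides. Finally I would record that diagonality of $x$ gives $e_{k,k}x=xe_{k,k}=e_{k,k}xe_{k,k}=x_{k,k}$.

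With these in hand the computation is short. Writing $x_{k,k}=e_{k,k}xe_{k,k}$, applying the Leibniz rule to $D(x_{k,k})$, and then compressing by $e_{k,k}$ on the left and right, produces three terms; after using $e_{k,k}x=e_{k,k}xe_{k,k}$ and $xe_{k,k}=e_{k,k}xe_{k,k}$ the two boundary terms each carry a factor $e_{k,k}D(e_{k,k})e_{k,k}$ and therefore vanish, leaving $e_{k,k}D(x_{k,k})e_{k,k}=e_{k,k}D(x)e_{k,k}$. Chaining the equalities then gives
\[
\Delta(x_{k,k})=e_{k,k}\Delta(x_{k,k})e_{k,k}=e_{k,k}D(x_{k,k})e_{k,k}=e_{k,k}D(x)e_{k,k}=e_{k,k}\Delta(x)e_{k,k},
\]
where the first step is Lemma~\ref{three}, and the second and last use the matching of $D$ with $\Delta$ on $x_{k,k}$ and on $x$ respectively. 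The step I expect to require the most care is the vanishing of the boundary terms: this is precisely where diagonality of $x$ is used, since for a general $x$ the cross terms $e_{k,k}D(e_{k,k})xe_{k,k}$ and $e_{k,k}xD(e_{k,k})e_{k,k}$ need not disappear, and it is also the only place where the identity $e_{k,k}D(e_{k,k})e_{k,k}=0$ enters.
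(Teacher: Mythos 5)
Your argument is correct, and it shares the paper's key structural choices: you interpolate $\Delta$ at the pair $(x,x_{k,k})$ (exactly as the paper does, and for the same reason you articulate), and you use the identity $\Delta(x_{k,k})=e_{k,k}\Delta(x_{k,k})e_{k,k}$ from \eqref{compo} of Lemma~\ref{three}. Where you diverge is in how the two compressions are identified. The paper invokes Lemma~\ref{idp} to take the interpolating derivation inner, writing it as $\mbox{ad}(a)$, and then checks by direct computation that both $e_{k,k}[a,x]e_{k,k}$ and $e_{k,k}[a,x_{k,k}]e_{k,k}$ equal $[a_{k,k},x_{k,k}]$, diagonality of $x$ entering through $e_{k,k}x=xe_{k,k}=x_{k,k}$. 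You instead keep the derivation $D$ abstract and derive $e_{k,k}D(x_{k,k})e_{k,k}=e_{k,k}D(x)e_{k,k}$ from the Leibniz rule applied to $x_{k,k}=e_{k,k}xe_{k,k}$, killing the boundary terms with the idempotent identity $e_{k,k}D(e_{k,k})e_{k,k}=0$ (which you prove correctly) together with diagonality; your accounting of where diagonality is genuinely needed is accurate. The trade-off: your route shows that this particular lemma needs neither the inner derivation property of $M_n(\mathcal{A})$ nor any norm structure --- it holds for an arbitrary $2$-local derivation on $M_n(\mathcal{A})$ vanishing on $\mbox{sp}\{e_{i,j}\}_{i,j=1}^n$, a small step in the direction of the Problem posed at the end of Section~2 --- whereas the paper's version is a shorter calculation that fits the surrounding lemmas, where innerness is repeatedly exploited (e.g.\ in Lemmata~\ref{lemmafive} and \ref{six}, which rely on the triangular form \eqref{trian} of a commuting element and so cannot dispense with it as easily).
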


\begin{proof} Take an element $a$ from $M_n(\mathcal{A})$ such that
$$
\Delta(x)=[a, x],\, \Delta(x_{k,k})=[a, x_{k,k}].
$$
Since  $x$ is a diagonal matrix, the equality
\eqref{compo} implies that
\begin{eqnarray*}
\Delta(x_{k,k}) & = & e_{k,k}\Delta(x_{k,k})e_{k,k}=e_{k,k}[a,
x_{k,k}] e_{k,k} = [a_{k,k},  x_{k,k}]
\end{eqnarray*}
and
\begin{eqnarray*}
e_{k,k}\Delta(x)e_{k,k} & = & e_{k,k}[a, x] e_{k,k} = [a_{k,k},
x_{k,k}].
\end{eqnarray*}
Thus $e_{k,k}\Delta(x)e_{k,k}=\Delta(x_{k,k}).$ The proof is
complete. \end{proof}

\begin{lemma}\label{lemmafive}
 $e_{j,i}\Delta_{i,i}(x)e_{i,j}=\Delta_{j,j}(e_{j,i}xe_{i,j})$ for all
 $x=x_{i,i}\in \mathcal{M}_{i,i}.$
\end{lemma}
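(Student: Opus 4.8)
The plan is to produce a \emph{single} inner derivation that simultaneously computes $\Delta$ at $x$ and at $y:=e_{j,i}xe_{i,j}$ while having prescribed behaviour on the matrix unit $e_{i,j}$. Throughout I assume $i\neq j$, the case $i=j$ being trivial since then $y=x$ and both sides equal $\Delta_{i,i}(x)$.

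The naive attempt---applying the $2$-local property to the pair $(x,e_{i,j})$---recovers $\Delta(x)$ and forces the implementing element to commute with $e_{i,j}$, but it says nothing about $\Delta(y)$; symmetrically, pairing $y$ with $e_{i,j}$ loses $\Delta(x)$. Since a $2$-local derivation only guarantees agreement with a genuine derivation at \emph{two} prescribed points, one cannot demand agreement at $x$, at $y$ and at $e_{i,j}$ all at once. The device that gets around this is to package $x$ and $y$ into their sum. Set $w=x+y$; this is a diagonal matrix whose only nonzero diagonal blocks are $w_{i,i}=x$ and $w_{j,j}=y$. By Lemma~\ref{lemmafour}, $e_{i,i}\Delta(w)e_{i,i}=\Delta(x)$ and $e_{j,j}\Delta(w)e_{j,j}=\Delta(y)$, so the single value $\Delta(w)$ encodes both quantities I need.

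Next I would apply the $2$-local property to the pair $(w,e_{i,j})$, obtaining a derivation $D$ with $\Delta(w)=D(w)$ and $D(e_{i,j})=\Delta(e_{i,j})=0$ (the latter because $\Delta$ vanishes on $\mbox{sp}\{e_{i,j}\}$). By Lemma~\ref{idp}, $D=\mbox{ad}(c)$ for some $c$, and the relation $[c,e_{i,j}]=0$ forces the two relevant diagonal blocks to agree in the sense $e_{j,i}c_{i,i}e_{i,j}=c_{j,j}$, where $c_{k,k}=e_{k,k}ce_{k,k}$. With this in hand both sides become routine. Using \eqref{compo}, Lemma~\ref{three} and Lemma~\ref{lemmafour},
\[
e_{j,i}\Delta_{i,i}(x)e_{i,j}=e_{j,i}\,e_{i,i}\Delta(w)e_{i,i}\,e_{i,j}=e_{j,i}[c,w]e_{i,j}=e_{j,i}[c_{i,i},x]e_{i,j},
\]
while $\Delta_{j,j}(y)=e_{j,j}\Delta(w)e_{j,j}=e_{j,j}[c,w]e_{j,j}=[c_{j,j},y]$. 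The conjugation identity $e_{j,i}[c_{i,i},x]e_{i,j}=[e_{j,i}c_{i,i}e_{i,j},\,e_{j,i}xe_{i,j}]=[c_{j,j},y]$, valid since $x=e_{i,i}xe_{i,i}$ and $e_{i,j}e_{j,i}=e_{i,i}$, then equates the two expressions.

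The only genuinely delicate point is the one already flagged: a lone $2$-local evaluation cannot see $\Delta(x)$ and $\Delta(y)$ together. Writing them as the single diagonal element $w=x+y$ and invoking the block formula of Lemma~\ref{lemmafour} is exactly what lets one derivation do all the work, after which the matrix-unit slot $e_{i,j}$ supplies the decisive commutation $c_{i,i}\leftrightarrow c_{j,j}$. Everything else is bookkeeping with matrix units.
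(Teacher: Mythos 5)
Your proof is correct and follows essentially the same route as the paper: both form the diagonal element $w=x+e_{j,i}xe_{i,j}$, use Lemma~\ref{lemmafour} to read off $\Delta(x)$ and $\Delta(e_{j,i}xe_{i,j})$ from the two diagonal compressions of $\Delta(w)$, and then constrain the single inner implementing element by pairing $w$ with a second test point on which $\Delta$ vanishes. The only (harmless) deviation is the choice of anchor: the paper pairs $w$ with the shift $v$ from \eqref{vv}, forcing the Toeplitz form \eqref{trian} and hence equal diagonal blocks $a_1$, whereas you pair with $e_{i,j}$, whose commutation relation $c_{i,i}e_{i,j}=e_{i,j}c_{j,j}$ identifies precisely the two blocks that matter.
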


\begin{proof} For $i=j$ we have already proved (see Lemma~\ref{lemmafour}).

Suppose that $i\neq j.$  For arbitrary element $x=x_{i,i}\in
\mathcal{M}_{i,i}$ , consider  $y=x+e_{j,i}xe_{i,j}\in
\mathcal{M}_{i,i}+\mathcal{M}_{j,j}.$ Take an element $a\in
\mathcal{A}$ such that
\begin{center}
$\Delta(x)=[a, y]$ and $\Delta(v)=[a, v],$
\end{center}
where $v$ is the element from~\eqref{vv}. Since $\Delta(v)=0,$ it
follows that $a$ has the form~\eqref{trian}. By
Lemma~\ref{lemmafour} we obtain that
$$
e_{j,i}\Delta_{i,i}(x)e_{i,j}=e_{j,i}e_{i,i}\Delta(y)e_{i,i}e_{i,j}=e_{j,i}[a,
y]e_{i,j}=e_{j,i}[a_1, x]e_{i,j}
$$
and
$$
\Delta_{j,j}(e_{j,i}xe_{i,j})=e_{j,j}\Delta(y)e_{j,j}=e_{j,j}[a,
y]e_{j,j}=e_{j,j}[a,x+e_{j,i}xe_{i,j}]e_{j,j}=e_{j,i}[a_1,
x]e_{i,j}.
$$
The proof is complete. \end{proof}

Further in Lemmata~\ref{six}--\ref{adj} we assume that $n\geq 3.$

\begin{lemma}\label{six}
 $\Delta_{i,i}$ is additive for all $i\in \overline{1,n}.$
\end{lemma}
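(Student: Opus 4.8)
The plan is to deduce additivity of $\Delta_{i,i}$ from the block machinery of Lemmata~\ref{three}--\ref{lemmafive}, using the third index supplied by the hypothesis $n\geq 3$ to convert addition inside a single block into a computation that a \emph{single} inner derivation can resolve. Throughout I would use that, by Lemma~\ref{idp}, every derivation of $M_n(\mathcal{A})$ is inner, so each derivation furnished by $2$-locality has the form $\mbox{ad}(c)$.

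First I would reduce to one fixed block. Lemma~\ref{lemmafive} says that for $j\neq i$ the conjugation $\gamma_{j,i}\colon z\mapsto e_{j,i}\,z\,e_{i,j}$, an additive bijection of $\mathcal{M}_{i,i}$ onto $\mathcal{M}_{j,j}$ with additive inverse $\gamma_{i,j}$, intertwines the two restrictions: $\Delta_{j,j}\circ\gamma_{j,i}=\gamma_{j,i}\circ\Delta_{i,i}$. Consequently $\Delta_{i,i}$ is additive if and only if $\Delta_{j,j}$ is, and it suffices to prove additivity for one index, say $i=1$.

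Next, for $x,y\in\mathcal{M}_{1,1}$ I would embed the pair into two distinct off-diagonal slots by setting $w=x\,e_{1,2}+y\,e_{1,3}$, which is available precisely because $n\geq 3$. The point of the embedding is the separation of channels: one has $w\,e_{2,1}=x$, $w\,e_{3,1}=y$ and $w(e_{2,1}+e_{3,1})=x+y$, while the cross products $x\,e_{1,2}e_{3,1}$ and $y\,e_{1,3}e_{2,1}$ vanish since $2\neq 3$. I would then apply $2$-locality at the pair $(w,v)$, with $v$ the element from~\eqref{vv}; as $\Delta(v)=0$ (because $v\in\mbox{sp}\{e_{p,q}\}$), the resulting $\mbox{ad}(c)$ satisfies $[c,v]=0$, so $c$ has the triangular Toeplitz shape~\eqref{trian} and $\Delta(w)=[c,w]$. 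Expanding $[c,w]$ and multiplying on the right successively by $e_{2,1}$, by $e_{3,1}$ and by their sum transports the three values $\Delta(w)e_{2,1}$, $\Delta(w)e_{3,1}$, $\Delta(w)(e_{2,1}+e_{3,1})$ into the single block $\mathcal{M}_{1,1}$; since right multiplication by a matrix unit is additive, these three expressions already satisfy the additive relation among themselves. Identifying them with $\Delta(x)$, $\Delta(y)$ and $\Delta(x+y)$ would then close the argument, and here I would lean on Lemma~\ref{three} (that $\Delta$ preserves each $\mathcal{M}_{1,k}$ and evaluates block-wise there) applied to $x\,e_{1,2}$ and $y\,e_{1,3}$, on Lemma~\ref{lemmafour}, and on the vanishing of $\Delta$ on $\mbox{sp}\{e_{p,q}\}$.

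The main obstacle is exactly this last identification. The implementing element $c$ is pinned only at $w$ and $v$, so besides the wanted commutators $[a_1,x]$ and $[a_1,y]$ (with $a_1$ the diagonal entry in~\eqref{trian}) the expansion of $[c,w]$ carries ``tail'' contributions coming from the off-diagonal entries $a_2,a_3,\dots$ of~\eqref{trian} — the algebraic shadow of the fact that a $2$-local derivation need not be pinned simultaneously at $x$, $y$ and $x+y$. The crux of the proof is to show that these tails do not contribute after the matrix-unit multiplications, and I would expect to remove them by independently computing the $(1,2)$- and $(1,3)$-blocks of $\Delta(w)$ from Lemma~\ref{three} (via further applications of $2$-locality at pairs such as $(w,e_{1,2})$) and forcing consistency with the two expansions. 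The presence of the third column is what makes this reconciliation possible: it routes the tail of the $x$-channel and the tail of the $y$-channel into disjoint columns, so that the comparison decouples. Once this bookkeeping is completed one obtains $\Delta_{1,1}(x+y)=\Delta_{1,1}(x)+\Delta_{1,1}(y)$, and by the first step additivity then holds for every $\Delta_{i,i}$.
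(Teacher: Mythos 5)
Your reduction to a single index via Lemma~\ref{lemmafive} is sound, but the core of the argument has a gap that you yourself flag and never close. Your scheme stores $x$ and $y$ in the off-diagonal slots of $w=x\,e_{1,2}+y\,e_{1,3}$ and then needs three identifications: $e_{1,1}\Delta(w)e_{2,2}=\Delta(x\,e_{1,2})$, $e_{1,1}\Delta(w)e_{3,3}=\Delta(y\,e_{1,3})$, and some way to produce $\Delta_{1,1}(x+y)$. None of these follows from Lemmata~\ref{three}--\ref{lemmafive}. Lemma~\ref{lemmafour} gives blockwise evaluation only for \emph{diagonal} matrices, and that restriction is not cosmetic: for any implementing element $a$ with $\Delta(w)=[a,w]$ one computes $e_{1,1}[a,w]e_{3,3}=a_{1,1}y-x\,a_{2,3}-y\,a_{3,3}$, so the $(1,3)$-compression carries the cross-channel term $x\,a_{2,3}$, which depends on an entry of $a$ that changes with every pinning pair. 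Even after pinning at $(w,v)$, where $a$ has the Toeplitz form~\eqref{trian}, the tail survives: $[a,w]_{1,3}=[a_1,y]-x\,a_2$, while only the $(1,2)$-entry happens to come out clean as $[a_1,x]$ (since $a_{3,2}=0$). For diagonal matrices these tails cancel identically, which is exactly why Lemma~\ref{lemmafour} holds; for your $w$ they do not, and your plan to remove them by ``forcing consistency'' with further pinnings such as $(w,e_{1,2})$ founders on the fact that each new pair produces a \emph{different} implementing element --- $2$-locality never gives you one element pinned at three points, which is precisely the difficulty the whole proof must circumvent. Two further problems: $x+y$ occupies no slot of $w$, so identifying $\Delta(w)(e_{2,1}+e_{3,1})$ with $\Delta_{1,1}(x+y)$ would require a Leibniz rule across the product $w(e_{2,1}+e_{3,1})$ that a $2$-local map does not satisfy (your observation that the three right-multiplied expressions ``already satisfy the additive relation'' is vacuous until they are identified with the three values of $\Delta_{1,1}$); and even granting blockwise decomposition you still need a bridge from $\Delta(x\,e_{1,2})$ back to $\Delta_{1,1}(x)$, whereas Lemma~\ref{lemmafive} transports only between diagonal blocks.

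The paper's proof sidesteps all of this by keeping every channel diagonal: for distinct $k,s\neq i$ it forms the single element $z=x+y+e_{k,i}xe_{i,k}+e_{s,i}ye_{i,s}$, storing $x+y$, $x$, $y$ in the three diagonal blocks $i$, $k$, $s$, and pins at the pair $(z,v)$. The form~\eqref{trian} forces all diagonal entries of the implementing element to equal a single $a_1$, and since a diagonal compression of $[a,z]$ with $z$ diagonal sees only the diagonal entries of $a$, no tail terms arise: Lemmata~\ref{lemmafour} and~\ref{lemmafive} (whose proofs contain their own pinning arguments) identify the three compressions as $\Delta_{i,i}(x+y)=[a_1,x+y]$, $\Delta_{i,i}(x)=[a_1,x]$, $\Delta_{i,i}(y)=[a_1,y]$, and additivity is immediate. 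To salvage your off-diagonal variant you would first have to prove off-diagonal analogues of Lemmata~\ref{lemmafour} and~\ref{lemmafive}, and the computation above shows the former fails as stated for your $w$.
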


\begin{proof} Let $i\in \overline{1,n}.$ Since $n\geq 3,$ we can take different numbers $k, s$ such that $k\neq i, s\neq i.$

For arbitrary $x, y\in \mathcal{M}_{i,i}$  consider
$z=x+y+e_{k,i}xe_{i,k}+e_{s,i}ye_{i,s}\in
\mathcal{M}_{i,i}+\mathcal{M}_{k,k}+\mathcal{M}_{s,s}.$ Take a
matrix  $a\in M_n(\mathcal{A})$ such that
\begin{center}
$\Delta(z)=[a, z]$ and $\Delta(v)=[a, v],$
\end{center}
where $v$ is  the element from~\eqref{vv}. Since $\Delta(v)=0,$ it
follows that $a$ has the form~\eqref{trian}. Using
Lemma~\ref{lemmafour} we obtain that
\begin{eqnarray*}
& & \Delta_{i,i}(x+y) = e_{i,i}\Delta(z)e_{i,i}=e_{i,i}[a,
z]e_{i,i}=[a_1, x+y],\\
& & \Delta_{i,i}(x) = e_{i,k}\Delta(e_{k,i}x
e_{i,k})e_{k,i}=e_{i,k}e_{k,k}\Delta(z)e_{k,k}e_{k,i}=e_{i,k}[a,
z]e_{k,i}=[a_1, x],\\
& & \Delta_{i,i}(y)=e_{i,s}\Delta(e_{s,i}y
e_{i,s})e_{s,i}=e_{i,s}e_{s,s}\Delta(z)e_{s,s}e_{s,i}=e_{i,s}[a,
z]e_{s,i}=[a_1, y].
\end{eqnarray*} Hence
$$
\Delta_{i,i}(x+y)=\Delta_{i,i}(x)+\Delta_{i,i}(y).
$$
The proof is complete. \end{proof}

As it was mentioned in the beginning of the section  any additive
2-local derivation on a semi-prime algebra is a derivation. Since
$\mathcal{M}_{i,i}\equiv \mathcal{A}$ is semi-prime,
Lemma~\ref{six} implies the following result.

\begin{lemma}\label{lemmaseven}
 $\Delta_{i,i}$ is a derivation for all $i\in \overline{1,n}.$
\end{lemma}

Since $\Delta_{1,1}$ is a derivation on
$e_{1,1}M_n(\mathcal{A})e_{1,1}\equiv \mathcal{A}$ and
$\mathcal{A}$ has the inner derivation property, it follows that
there exists an element $a_{1,1}$ in $\mathcal{A}$ such that
$\Delta_{1,1}=\mbox{ad}(a_{1,1}).$ Set
$\tilde{a}=\sum\limits_{i=1}^n e_{i,1}a_{1,1}e_{1,i}.$

Denote by $\mathcal{D}_n$ the set of all diagonal matrices from $M_n(\mathcal{A}),$ i.e.
the set of all matrices of the following form
\[
x=\left( \begin{array}{cccccc}
x_1 & 0 & 0 & \ldots & 0 \\
0 & x_2 &  0 & \ldots & 0\\
\vdots& \vdots& \vdots &\vdots & \vdots\\
0 & 0 &\ldots & x_{n-1} & 0\\
0 & 0 &\ldots & 0 & x_n
\end{array} \right).
\]

Let $x\in \mathcal{D}_n.$ Then
\begin{eqnarray*}
[\tilde{a},x]_{i,i} & = & e_{i,i}\tilde{a}x e_{i,i}-e_{i,i}x
\tilde{a}e_{i,i}=e_{i,1}a_{1,1}e_{1,i}x_{i,i}-x_{i,i}e_{1,i}a_{1,1}e_{1,i}=\\
&=& e_{i,1}[a_{1,1}, e_{1,i}x_{i,i}
e_{i,1}]e_{1,i}=e_{i,1}\Delta_{1,1}(e_{1,i}x_{i,i}
e_{i,1})e_{1,i}=\Delta_{i,i}(x),
\end{eqnarray*}
i.e. $\mbox{ad}(\tilde{a})|_{\mathcal{D}_n}
=\Delta|_{\mathcal{D}_n}.$

Further,
\begin{eqnarray*}
[\tilde{a},e_{i,j}] & = & \tilde{a} e_{i,j}-e_{i,j}
\tilde{a}=e_{i,1}a_{1,1}e_{1,i}e_{i,j}-e_{i,j}e_{j,1}a_{1,1}e_{1,j}=\\
&=& e_{i,1}a_{1,1}e_{1,j}-e_{i,1}a_{1,1}e_{1,j}=0,
\end{eqnarray*}
i.e. $\mbox{ad}(\tilde{a})|_{\mbox{sp}\{e_{i,j}\}_{i,j=1}^n}\equiv
0,$ where $\mbox{sp}\{e_{i,j}\}_{i,j=1}^n$ is the linear span of
the set $\{e_{i,j}\}_{i,j=1}^n.$

So, we have proved the following result.

\begin{lemma}\label{adj}
$\Delta|_{\mathcal{D}_n}=\mbox{ad}(\tilde{a})|_{\mathcal{D}_n}$
and $\mbox{ad}(\tilde{a})|_{\mbox{sp}\{e_{i,j}\}_{i,j=1}^n}=0.$
\end{lemma}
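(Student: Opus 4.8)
The plan is to produce a single inner derivation that reproduces $\Delta$ on the diagonal while annihilating the matrix units, and the natural candidate is built from the $(1,1)$-corner. By Lemma~\ref{lemmaseven} the restriction $\Delta_{1,1}$ is a derivation on $e_{1,1}M_n(\mathcal{A})e_{1,1}\equiv\mathcal{A}$; since $\mathcal{A}$ enjoys the inner derivation property, there is $a_{1,1}\in\mathcal{A}$ with $\Delta_{1,1}=\mbox{ad}(a_{1,1})$. I would then set $\tilde{a}=\sum_{i=1}^n e_{i,1}a_{1,1}e_{1,i}$, the diagonal matrix carrying $a_{1,1}$ in every diagonal slot, and verify the two asserted identities.

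For the diagonal identity, fix $x=\sum_i x_{i,i}\in\mathcal{D}_n$. First I would note that $\Delta(x)$ is itself diagonal: choosing a derivation $D$ with $\Delta(x)=D(x)$ and $\Delta(u)=D(u)$, and writing $D=\mbox{ad}(c)$ via Lemma~\ref{idp}, the normalization $\Delta(u)=0$ forces $[c,u]=0$, so $c$, and hence $\Delta(x)=[c,x]$, is diagonal. Since both $\Delta(x)$ and $[\tilde{a},x]$ are diagonal, it suffices to compare their $(i,i)$-blocks. Here Lemma~\ref{lemmafour} gives $e_{i,i}\Delta(x)e_{i,i}=\Delta_{i,i}(x_{i,i})$, while Lemma~\ref{lemmafive} transports this to the first corner, identifying it with $e_{i,1}\Delta_{1,1}(e_{1,i}x_{i,i}e_{i,1})e_{1,i}=e_{i,1}[a_{1,1},e_{1,i}x_{i,i}e_{i,1}]e_{1,i}$; a direct expansion shows this equals $[\tilde{a},x]_{i,i}$. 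Matching all blocks yields $\Delta|_{\mathcal{D}_n}=\mbox{ad}(\tilde{a})|_{\mathcal{D}_n}$.

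For the second identity I would simply expand $[\tilde{a},e_{i,j}]=e_{i,1}a_{1,1}e_{1,i}e_{i,j}-e_{i,j}e_{j,1}a_{1,1}e_{1,j}=e_{i,1}a_{1,1}e_{1,j}-e_{i,1}a_{1,1}e_{1,j}=0$, the cancellation occurring precisely because the same element $a_{1,1}$ sits in each diagonal entry of $\tilde{a}$.

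The computations are routine; the only real content is organizational. The crucial observation, which makes a single corner element $a_{1,1}$ sufficient, is that Lemma~\ref{lemmafive} links the derivations $\Delta_{i,i}$ on the different corners through conjugation by matrix units, so that the inner implementer found on the $(1,1)$-corner propagates consistently to every diagonal block. The main point to watch is therefore the correct use of Lemma~\ref{lemmafive} to express each $\Delta_{i,i}$ in terms of $\mbox{ad}(a_{1,1})$, after which both identities follow by direct index bookkeeping.
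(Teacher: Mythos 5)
Your proposal is correct and takes essentially the same route as the paper: the same element $\tilde{a}=\sum_{i=1}^n e_{i,1}a_{1,1}e_{1,i}$ built from the inner implementer of $\Delta_{1,1}$, the same use of Lemmata~\ref{lemmafour} and~\ref{lemmafive} to identify $[\tilde{a},x]_{i,i}$ with $\Delta_{i,i}(x_{i,i})$, and the same index cancellation giving $[\tilde{a},e_{i,j}]=0$. Your explicit check that $\Delta(x)$ is diagonal for diagonal $x$ (via a derivation fixing $u$, whose implementer must commute with $u$) is a step the paper leaves implicit but which is indeed needed to pass from matching diagonal blocks to full equality on $\mathcal{D}_n$, so including it strengthens rather than changes the argument.
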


Replacing, if necessary,  $\Delta$ by $\Delta-\mbox{ad}(\tilde{a}),$ below in this
section we assume that
$$
\Delta|_{\mathcal{D}_n}\equiv 0,\,
\Delta|_{\mbox{sp}\{e_{i,j}\}_{i,j=1}^n}\equiv 0.
$$

Now we are in position to pass to the second step of our proof. As
the second  step let us show that if a 2-local derivation
$\Delta$ on a matrix algebra  equals to zero on   all diagonal  matrices
and on the linear span  of matrix units, then it
is identically zero on the whole algebra. In order to prove this we
first consider the $2\times 2$-matrix algebras case.

\subsection{ The case of $\textbf{2}\times \textbf{2}$-matrices}

In this subsection we shall assume that $\mathcal{B}$ is a unital
Banach algebra with the inner derivation property and $\Delta$ is
a 2-local derivation on $M_2(\mathcal{B})$, such that
\begin{center}
$\Delta|_{\mbox{sp}\{e_{i,j}\}_{i,j=1}^2}\equiv0$ and
$\Delta|_{\mathcal{D}_2}\equiv 0.$
\end{center}
We also denote by $e$ the unit of the algebra $\mathcal{B}.$

\begin{lemma}\label{ss}
$\Delta(x)_{1,1}=\Delta(x)_{2,2}=0$ for all $x\in
M_2(\mathcal{B}).$
\end{lemma}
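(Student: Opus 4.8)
The plan is to read off the two diagonal corners of $\Delta(x)$ one at a time by pairing $x$ with a diagonal matrix unit, and then to kill the resulting commutators with the squaring identity~\eqref{joor}. Fix $x\in M_2(\mathcal{B})$ and apply the $2$-local property to the pair $x,e_{1,1}$: there is a derivation $D$ with $\Delta(x)=D(x)$ and $\Delta(e_{1,1})=D(e_{1,1})$. Since $M_2(\mathcal{B})$ has the inner derivation property by Lemma~\ref{idp}, we may write $D=\mbox{ad}(a)$; and since $e_{1,1}\in\mbox{sp}\{e_{i,j}\}_{i,j=1}^2$ we have $\Delta(e_{1,1})=0$, so $[a,e_{1,1}]=0$ and therefore $a$ is diagonal, $a=a_{1,1}e_{1,1}+a_{2,2}e_{2,2}$ with $a_{i,i}\in\mathcal{B}$. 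Writing $c:=\Delta(x)_{1,1}$ for the $(1,1)$-entry viewed in $\mathcal{B}$, the relation $\Delta(x)=[a,x]$ gives $c=[a_{1,1},x_{1,1}]$, and symmetrically $\Delta(x)_{2,2}=[a_{2,2},x_{2,2}]$. Thus everything reduces to proving that such commutators vanish, and by the symmetry $1\leftrightarrow 2$ I treat only the $(1,1)$-corner.

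The engine for eliminating the commutator combines two observations. The first is a scalar-shift invariance: for a scalar $\lambda$ the matrices $x$ and $x+\lambda e_{1,1}$ differ by a diagonal matrix, on which $\Delta$ vanishes, so pairing them and compressing to the $(1,1)$-corner yields $\Delta(x)_{1,1}=\Delta(x+\lambda e_{1,1})_{1,1}$; that is, $c$ is unchanged when $x_{1,1}$ is replaced by $x_{1,1}+\lambda e$, where $e$ is the unit of $\mathcal{B}$ (this works because the $(1,1)$-entry of $e_{1,1}$ is the central scalar $e$). The second is an anti-commutation coming from~\eqref{joor} fed with an element whose square is diagonal. Concretely, for $b\in\mathcal{B}$ put $w=be_{1,1}+e_{1,2}-be_{2,2}$, i.e. $w=\left(\begin{smallmatrix} b & e\\ 0 & -b\end{smallmatrix}\right)$; then $w^2=b^2\mathbf{1}$ is diagonal, hence $\Delta(w^2)=0$ and~\eqref{joor} becomes $\Delta(w)w+w\Delta(w)=0$. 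Running the reduction of the first paragraph on $w$ forces the lower-left corner $\Delta(w)_{2,1}$ to vanish, and the $(1,1)$-entry of $\Delta(w)w+w\Delta(w)=0$ then collapses to $\gamma b+b\gamma=0$ with $\gamma:=\Delta(w)_{1,1}$. Because $w$ stays in this same family when $b$ is replaced by $b+\lambda e$, shift invariance shows $\gamma$ is independent of $\lambda$ while the anti-commutation reads $\gamma(b+\lambda e)+(b+\lambda e)\gamma=0$; subtracting the $\lambda=0$ case leaves $2\lambda\gamma=0$, so $\gamma=0$. Hence $\Delta$ already annihilates the $(1,1)$-corner on the whole family $\left(\begin{smallmatrix} b & e\\ 0 & -b\end{smallmatrix}\right)$, and likewise on $\left(\begin{smallmatrix} b & 0\\ e & -b\end{smallmatrix}\right)$.

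The remaining, and main, difficulty is to pass from these special elements to an arbitrary $x=\left(\begin{smallmatrix}x_{1,1} & x_{1,2}\\ x_{2,1} & x_{2,2}\end{smallmatrix}\right)$. The obstruction is that the first paragraph presents $\Delta(x)_{1,1}$ as the pure commutator $[a_{1,1},x_{1,1}]$, whereas pairing $x$ against the diagonal-square elements above produces expressions for $\Delta(x)_{1,1}$ that also involve the off-diagonal data $x_{1,2},x_{2,1}$ through cross terms such as $a_{1,2}x_{2,1}$ and $x_{1,2}a_{2,1}$. I would eliminate these cross terms by pairing $x$, in separate applications of $2$-locality, with the two families $\left(\begin{smallmatrix}x_{1,1} & e\\ 0 & -x_{1,1}\end{smallmatrix}\right)$ and $\left(\begin{smallmatrix}x_{1,1} & 0\\ e & -x_{1,1}\end{smallmatrix}\right)$ and with the matrix units $e_{1,2},e_{2,1}$, each time using the already established vanishing ($\Delta=0$ on $\mathcal{D}_2$, on $\mbox{sp}\{e_{i,j}\}_{i,j=1}^2$, and on the special families just treated) to pin down the off-diagonal blocks of the corresponding generators, and then re-running the shift-plus-squaring argument. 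I expect the bookkeeping needed to make all the cross terms cancel to be the technical heart of the proof; the two structural inputs that drive it are that $\Delta$ vanishes on $\mathcal{D}_2$ (yielding shift invariance) and that suitable auxiliary matrices have diagonal square (yielding the anti-commutation via~\eqref{joor}).
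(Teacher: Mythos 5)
Your first two paragraphs check out: pairing $x$ with $e_{1,1}$ does force the implementing element to be diagonal, your shift invariance of the $(1,1)$-corner is a correct consequence of $[a,e_{1,1}]$ (and $[a,e_{1,1}-e_{2,2}]$) having zero diagonal, and the argument via \eqref{joor} that $\Delta(w)_{1,1}=0$ on the family $w=\bigl(\begin{smallmatrix} b & e\\ 0 & -b\end{smallmatrix}\bigr)$ is sound. But the third paragraph --- which you yourself identify as the technical heart --- is not an argument, only a declared intention, and there is a structural reason it does not close as stated. In a $2$-local setting each pairing produces its \emph{own} implementing element: what you learn about the off-diagonal blocks of the $a$ arising from the pair $(x,w)$ cannot be combined with what you learn about the different $a$ arising from $(x,e_{2,1})$. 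And no single one of your proposed pairings suffices. Pairing $(x,e_{1,1})$ kills the cross terms ($a$ diagonal) but leaves $[a_{1,1},x_{1,1}]$ completely uncontrolled; pairing $(x,w)$ with $w=\bigl(\begin{smallmatrix} x_{1,1} & e\\ 0 & -x_{1,1}\end{smallmatrix}\bigr)$ and using your established facts $\Delta(w)_{1,1}=\Delta(w)_{2,2}=\Delta(w)_{2,1}=0$ yields only the relations $[a_{1,1},x_{1,1}]=a_{2,1}$, $a_{2,1}x_{1,1}+x_{1,1}a_{2,1}=0$, $a_{2,1}=[a_{2,2},x_{1,1}]$, which leave $a_{1,2}$ free and hence $\Delta(x)_{1,1}=[a,x]_{1,1}=a_{2,1}+a_{1,2}x_{2,1}-x_{1,2}a_{2,1}$ undetermined. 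So the reduction from arbitrary $x$ to your special families is genuinely missing, not merely tedious.

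What your plan lacks is a single auxiliary element making \emph{one} pairing do all the work, and that is exactly how the paper proceeds. Using homogeneity, normalize $\|x_{1,1}\|<1$ so that $e+x_{1,1}$ is invertible in $\mathcal{B}$, and pair $x$ with the diagonal matrix $y=\bigl(\begin{smallmatrix} e+x_{1,1} & 0\\ 0 & 0\end{smallmatrix}\bigr)\in\mathcal{D}_2$: then $0=\Delta(y)=[a,y]$, and reading off entries gives $a_{2,1}(e+x_{1,1})=0$ and $(e+x_{1,1})a_{1,2}=0$, forcing $a_{2,1}=a_{1,2}=0$ by invertibility, while the $(1,1)$-entry gives $[a_{1,1},e+x_{1,1}]=0$, i.e.\ $[a_{1,1},x_{1,1}]=0$. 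Hence $\Delta(x)_{1,1}=[a,x]_{1,1}=[a_{1,1},x_{1,1}]=0$ in one stroke: the same pairing kills the cross terms and the diagonal commutator simultaneously. Your shift $x_{1,1}\mapsto e+x_{1,1}$ is precisely the right normalization, but it must be built into the auxiliary diagonal matrix itself rather than invoked as a separate invariance; with that change the proof collapses to the paper's short computation, and neither \eqref{joor} nor your special families are needed.
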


\begin{proof} Let $x=\left( \begin{array}{cc}
x_{1,1} & x_{1,2} \\
x_{2,1} & x_{2,2}
\end{array} \right).$ Since
$\Delta$ is homogeneous, we can assume that $\|x_{1,1}\|<1,$ where
$\|\cdot\|$ is the norm on $\mathcal{B}.$ Set $y=\left(
\begin{array}{cc}
e+x_{1,1} & 0 \\
0 & 0
\end{array} \right).$ Since $\|x_{1,1}\|<1,$ it
follows that $e+x_{1,1}$ is invertible in $\mathcal{B}.$ Take an
element $a\in M_2(\mathcal{B})$ such that
$$
\Delta(x)=[a,x],\, \Delta(y)=[a ,y].
$$
Since  $y\in \mathcal{D}_2$ we have that $0=\Delta(y)=[a ,y],$ and
therefore
$$
0=\Delta(y)_{1,1}=a_{1,1}(e+x_{1,1})-(e+x_{1,1})a_{1,1}=0,
$$
$$
0=\Delta(y)_{2,1}=a_{2,1}(e +x_{1,1})=0,
$$
and
$$
0=\Delta(y)_{1,2}=-(e +x_{1,1}) a_{1,2}=0.
$$
Thus
$$
a_{1,1}x_{1,1}-x_{1,1}a_{1,1}=0
$$
and
$$
a_{2,1}=a_{1,2}=0.
$$
The above equalities imply that
$$
\Delta(x)_{1,1}=a_{1,1}x_{1,1}-x_{1,1}a_{1,1}=0.
$$
In a similar way we can show that $\Delta(x)_{2,2}=0.$  The proof
is complete.
\end{proof}

\begin{lemma}\label{zerro}
Let   $x$ be a matrix with $x_{k,s}=\lambda e,$ where $\lambda\in
\mathbb{C}.$ Then $\Delta(x)_{k,s}=0.$
\end{lemma}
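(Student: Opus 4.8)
The plan is to handle the four positions $(k,s)$ separately. If $k=s$ there is nothing new to prove: Lemma~\ref{ss} already gives $\Delta(x)_{1,1}=\Delta(x)_{2,2}=0$ for \emph{every} $x\in M_2(\mathcal B)$, so in particular $\Delta(x)_{k,k}=0$, and the hypothesis $x_{k,k}=\lambda e$ is not even used. Thus only the off-diagonal positions require an argument, and by symmetry I will treat $(k,s)=(1,2)$, the case $(2,1)$ being identical after interchanging the indices.

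For $(1,2)$ I would apply the $2$-local property to the pair $x,\,e_{2,1}$: there is a derivation $D$ with $\Delta(x)=D(x)$ and $\Delta(e_{2,1})=D(e_{2,1})$, and by Lemma~\ref{idp} (applied to $M_2(\mathcal B)$) this $D$ is inner, $D=\mathrm{ad}(a)$ for some $a=(a_{i,j})$. Since $e_{2,1}\in\mathrm{sp}\{e_{i,j}\}_{i,j=1}^2$, where $\Delta$ vanishes, we obtain $[a,e_{2,1}]=\Delta(e_{2,1})=0$. The crucial point is to pair $x$ with the \emph{transposed} unit $e_{2,1}$ rather than $e_{1,2}$: a direct computation gives
\[
[a,e_{2,1}]=\begin{pmatrix} a_{1,2} & 0\\ a_{2,2}-a_{1,1} & -a_{1,2}\end{pmatrix},
\]
so the single relation $[a,e_{2,1}]=0$ forces simultaneously $a_{1,2}=0$ and $a_{1,1}=a_{2,2}$, which are exactly the two facts I need.

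It then remains to read off the $(1,2)$-entry of $\Delta(x)=[a,x]$. Using $x_{1,2}=\lambda e$,
\[
\Delta(x)_{1,2}=[a,x]_{1,2}=\lambda\,(a_{1,1}-a_{2,2})+a_{1,2}\,x_{2,2}-x_{1,1}\,a_{1,2},
\]
and substituting $a_{1,2}=0$ together with $a_{1,1}=a_{2,2}$ makes both summands vanish, giving $\Delta(x)_{1,2}=0$. The argument is uniform in $\lambda$ (in particular $\lambda=0$ is covered), and for the position $(2,1)$ one repeats it verbatim with the reference $e_{1,2}$, which forces $a_{2,1}=0$ and $a_{1,1}=a_{2,2}$.

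I do not expect a genuine obstacle, but the reason this statement is worth isolating as a lemma is precisely the choice of auxiliary matrix unit. Pairing $x$ with the ``obvious'' companion $e_{1,2}$ would instead produce $a_{2,1}=0$ and $a_{1,1}=a_{2,2}$, leaving the cross term $a_{1,2}x_{2,2}-x_{1,1}a_{1,2}$ uncontrolled, since a scalar off-diagonal reference cannot constrain how $a_{1,2}$ interacts with the non-scalar entries $x_{1,1},x_{2,2}$; symmetrically, a diagonal reference would kill that cross term but would not equalize $a_{1,1}$ and $a_{2,2}$. It is exactly the transposed unit $e_{2,1}$ that annihilates $a_{1,2}$ and equalizes the diagonal of $a$ at the same time, so that a single application of the $2$-local property suffices.
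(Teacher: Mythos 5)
Your proof is correct and takes essentially the same route as the paper: the paper also pairs $x$ with the transposed unit $e_{s,k}$ and exploits $\Delta(e_{s,k})=0$, though it extracts the conclusion coordinate-free, computing $e_{s,k}\Delta(x)e_{s,k}=e_{s,k}D(x)e_{s,k}$ via the Leibniz rule applied to $e_{s,k}xe_{s,k}=\lambda e_{s,k}$, which avoids invoking innerness (Lemma~\ref{idp}) at all. Your entrywise computation with $D=\mathrm{ad}(a)$ is an equivalent reformulation, and your closing observation about why the transposed unit (rather than $e_{k,s}$ or a diagonal reference) is the right companion is precisely the idea behind the paper's sandwich.
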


\begin{proof}
\begin{eqnarray*}
e_{s,k}\Delta(x)e_{s,k} & = &
e_{s,k}D_{e_{s,k}, x} (x) e_{s,k}  =\\
& = & D_{e_{s,k}, x} (e_{s,k} x e_{s,k})-D_{e_{s,k}, x}(e_{s,k})
x e_{s,k}-e_{s,k} x D_{e_{s,k}, x} (e_{s,k})=\\
&=& \lambda D_{e_{s,k}, x}(e_{s,k})-0-0=0.
\end{eqnarray*}
Thus
$$
\Delta(x)_{k,s}=e_{k,k}\Delta(x)e_{s,s}=e_{k,s}e_{s,k}\Delta(x)e_{s,k}e_{k,s}=0.
$$
The proof is complete. \end{proof}

\begin{lemma}\label{cc} Let $x=\left( \begin{array}{cc}
x_{1,1} & x_{1,2} \\
x_{2,1} & x_{2,2}
\end{array} \right)$ and
$y=\left( \begin{array}{cc}
x_{1,1} & x_{1,2} \\
0 & x_{2,2}
\end{array} \right).$ Then
$\Delta(x)_{1,2}=\Delta(y)_{1,2}.$
\end{lemma}

\begin{proof} Take a
matrix  $a\in M_2(\mathcal{B})$ such that
\begin{center}
$\Delta(x)=[a, x]$ and $\Delta(y)=[a, y].$
\end{center}
Then
$$
\Delta(x)_{1,2}=a_{1,1}x_{1,2}+a_{1,2}x_{2,2}-x_{1,1}a_{1,2}-x_{1,2}a_{2,2}=\Delta(y)_{1,2}.
$$
The proof is complete.
\end{proof}

\begin{lemma}\label{z} Let
$x=\left( \begin{array}{cc}
e+x_{1,1} & x_{1,1} \\
\lambda e & 0
\end{array} \right),$ where
$x_{1,1}\in \mathcal{B},$ $\|x_{1,1}\|<1,$ $\lambda\in
\mathbb{C}.$ Then $\Delta(x)=0.$
\end{lemma}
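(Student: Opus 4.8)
The plan is to reduce the whole statement to the single entry $\Delta(x)_{1,2}$ and then to pin down the element implementing a conveniently chosen derivation. First I would collect what is already settled for free. By Lemma~\ref{ss} we have $\Delta(x)_{1,1}=\Delta(x)_{2,2}=0$, and since the $(2,1)$-entry of $x$ is the scalar multiple $\lambda e$, Lemma~\ref{zerro} applied with $(k,s)=(2,1)$ gives $\Delta(x)_{2,1}=0$. Hence proving $\Delta(x)=0$ is exactly the same as proving $\Delta(x)_{1,2}=0$.

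Next I would eliminate the dependence on $\lambda$. By Lemma~\ref{cc} the quantity $\Delta(x)_{1,2}$ equals $\Delta(y)_{1,2}$ for the upper-triangular truncation $y=\left(\begin{smallmatrix} e+x_{1,1} & x_{1,1}\\ 0 & 0\end{smallmatrix}\right)$, which does not involve $\lambda$ at all. Thus $\Delta(x)_{1,2}$ takes the same value for every $\lambda$, and it suffices to establish $\Delta(x)_{1,2}=0$ for one convenient choice, say $\lambda=1$ (any $\lambda\neq 0$ serves equally well).

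Now fix $\lambda\neq 0$. Using the $2$-local property for the pair $(x,d)$ with the diagonal matrix $d=\left(\begin{smallmatrix} e+x_{1,1} & 0\\ 0 & 0\end{smallmatrix}\right)\in\mathcal{D}_2$, and invoking Lemma~\ref{idp} to write the corresponding derivation as $\mbox{ad}(a)$, I obtain $a\in M_2(\mathcal{B})$ with $\Delta(x)=[a,x]$ and $\Delta(d)=[a,d]$. Since $d\in\mathcal{D}_2$ we have $\Delta(d)=0$, so $[a,d]=0$; because $\|x_{1,1}\|<1$ makes $e+x_{1,1}$ invertible, this equation forces $a$ to be diagonal and forces $a_{1,1}$ to commute with $x_{1,1}$. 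Reading off the $(2,1)$-corner of $[a,x]$ gives $\lambda(a_{2,2}-a_{1,1})$, which must equal $\Delta(x)_{2,1}=0$; as $\lambda\neq 0$ this yields $a_{1,1}=a_{2,2}$. Finally $\Delta(x)_{1,2}=[a,x]_{1,2}=a_{1,1}x_{1,1}-x_{1,1}a_{2,2}$, and substituting $a_{2,2}=a_{1,1}$ together with $[a_{1,1},x_{1,1}]=0$ makes this vanish.

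The main obstacle is the case $\lambda=0$: the pairing with $d$ determines $a$ only up to the scalar-like freedom $a_{2,2}-a_{1,1}$, and when $\lambda=0$ the $(2,1)$-entry provides no relation to annihilate that difference, so the direct computation stalls. The device that removes this difficulty is Lemma~\ref{cc}, which shows that $\Delta(x)_{1,2}$ is insensitive to the $(2,1)$-entry; it therefore lets me transport the conclusion from a harmless value $\lambda\neq 0$ back to $\lambda=0$ without assuming any continuity of $\Delta$.
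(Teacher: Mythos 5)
Your proof is correct, and it uses the same toolkit as the paper (Lemmata~\ref{ss}, \ref{zerro}, \ref{cc}, invertibility of $e+x_{1,1}$) but runs the case analysis in the opposite direction, with a different pairing element in the direct computation. The paper's base case is $\lambda=0$: there $x$ is paired with the matrix unit $e_{2,1}$, and since $\Delta(e_{2,1})=0$ the implementing element commutes with $e_{2,1}$, hence has the form $a=\left(\begin{smallmatrix}a_1&0\\a_2&a_1\end{smallmatrix}\right)$; then $0=\Delta(x)_{2,1}=a_2(e+x_{1,1})$ gives $a_2=0$, whence $\Delta(x)_{1,2}=[a_1,x_{1,1}]=\Delta(x)_{1,1}=0$, and Lemma~\ref{cc} transports the conclusion to $\lambda\neq0$. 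Your base case is $\lambda\neq0$: you pair with the diagonal matrix $d=\left(\begin{smallmatrix}e+x_{1,1}&0\\0&0\end{smallmatrix}\right)$ (the same auxiliary element as in the proof of Lemma~\ref{ss}), use $\Delta|_{\mathcal{D}_2}=0$ together with invertibility of $e+x_{1,1}$ to force $a$ diagonal with $[a_{1,1},x_{1,1}]=0$, and extract $a_{1,1}=a_{2,2}$ from $0=\Delta(x)_{2,1}=\lambda(a_{2,2}-a_{1,1})$; Lemma~\ref{cc} then carries the result back to $\lambda=0$. All your entrywise computations check out ($a_{2,1}(e+x_{1,1})=0$ and $(e+x_{1,1})a_{1,2}=0$ do force $a_{2,1}=a_{1,2}=0$), and invoking Lemma~\ref{idp} to realize the derivation as $\mbox{ad}(a)$ is exactly what the paper does implicitly throughout this subsection. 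The trade-off is instructive: your diagonal pairing leaves the residual freedom $a_{2,2}-a_{1,1}$, which only the $(2,1)$-entry with $\lambda\neq0$ can annihilate --- the obstacle you correctly identify at $\lambda=0$ --- whereas the matrix-unit pairing equalizes the diagonal entries of $a$ automatically; in fact the paper's computation goes through verbatim for every $\lambda$ (the $\lambda$-terms cancel in the $(2,1)$-corner of $[a,x]$), so the paper's case split is strictly speaking avoidable, while in your route both the direct case and the Lemma~\ref{cc} transfer are genuinely needed.
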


\begin{proof} From Lemmata~\ref{ss} and \ref{zerro}, it follows that
$$
\Delta(x)_{1,1}=\Delta(x)_{2,2}=\Delta(x)_{2,1}=0.
$$

Let us to show that $\Delta(x)_{1,2}=0.$

Case 1. Let $\lambda=0.$ Take a matrix  $a\in M_2(\mathcal{B})$
such that
\begin{center}
$\Delta(x)=[a, x]$ and $\Delta(e_{2,1})=[a, e_{2,1}].$
\end{center}
Since the element $a$ commutes with $e_{2,1},$ it follows that $a$
is of the form $a=\left( \begin{array}{cc}
a_1 & 0 \\
a_2 & a_1
\end{array} \right).
$ Then
$$
0=\Delta(x)_{2,1}=a_2(e+x_{1,1})=0.
$$
Since $e+x_{1,1}$ is invertible in $\mathcal{B},$ it follows that
$a_2=0.$ From the last equality we obtain that
$$
0=\Delta(x)_{1,1}=a_1x_{1,1}-x_{1,1}a_1=\Delta(x)_{1,2}.
$$
i.e. $\Delta(x)_{1,2}=0.$ Therefore, $\Delta(x)=0.$

Case 2. Let $\lambda\neq 0.$ Set $y=\left( \begin{array}{cc}
e+x_{1,1} & x_{1,1} \\
0 & 0
\end{array} \right).$
By Case 1,  $\Delta(y)=0.$ Applying Lemma~\ref{cc} we obtain that
$$
0=\Delta(y)_{1,2}=\Delta(x)_{1,2}.
$$
i.e. $\Delta(x)_{1,2}=0.$ Thus $\Delta(x)=0.$ The proof is
complete.
\end{proof}

\begin{lemma}\label{zz}
If $x=\left( \begin{array}{cc}
x_{1,1} & x_{1,2} \\
\lambda e & 0
\end{array} \right),$
where $x_{1,1}$ is an invertible element in $\mathcal{B},$  then
$\Delta(x)=0.$
\end{lemma}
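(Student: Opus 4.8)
The plan is to reduce the statement to a single scalar equation about the $(1,2)$-entry of $\Delta(x)$, and then to kill that entry by pairing $x$ with a matrix of the form treated in Lemma~\ref{z}. First I would record what is already known: for $x=\left(\begin{array}{cc} x_{1,1} & x_{1,2}\\ \lambda e & 0\end{array}\right)$ Lemma~\ref{ss} gives $\Delta(x)_{1,1}=\Delta(x)_{2,2}=0$, while Lemma~\ref{zerro} (applied at the position $(2,1)$, where $x_{2,1}=\lambda e$) gives $\Delta(x)_{2,1}=0$. Hence it only remains to prove $\Delta(x)_{1,2}=0$. Since $\Delta$ is homogeneous and scaling $x$ by a nonzero scalar preserves its shape (the $(2,1)$-entry stays a scalar multiple of $e$, the $(1,1)$-entry stays invertible), I may assume without loss of generality that $\|x_{1,2}\|<1$.

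Next I would introduce the auxiliary matrix $w=\left(\begin{array}{cc} e+x_{1,2} & x_{1,2}\\ \mu e & 0\end{array}\right)$ with a scalar $\mu$ chosen so that $\mu\neq\lambda$ (for instance $\mu=\lambda+1$). Because $\|x_{1,2}\|<1$, this $w$ is exactly of the form covered by Lemma~\ref{z} (with $x_{1,2}$ playing the role of the small element and $\mu$ the role of $\lambda$), so $\Delta(w)=0$. By Lemma~\ref{idp} the algebra $M_2(\mathcal{B})$ has the inner derivation property, so I may take an element $a\in M_2(\mathcal{B})$ with $\Delta(x)=[a,x]$ and $\Delta(w)=[a,w]$. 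Since $\Delta(w)=0$ we get $[a,w]=0$, and therefore $\Delta(x)=[a,x]=[a,x-w]$.

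The crux is that $w$ was chosen to have the \emph{same} $(1,2)$-entry as $x$, so that $x-w=\left(\begin{array}{cc} h & 0\\ (\lambda-\mu)e & 0\end{array}\right)$ with $h=x_{1,1}-e-x_{1,2}$ has vanishing $(1,2)$-entry and carries a nonzero scalar $\lambda-\mu$ in the $(2,1)$-slot. A direct computation of $[a,x-w]$ then gives $\Delta(x)_{2,2}=[a,x-w]_{2,2}=-(\lambda-\mu)\,a_{1,2}$ and $\Delta(x)_{1,2}=[a,x-w]_{1,2}=-h\,a_{1,2}$. Comparing the first identity with the already-established $\Delta(x)_{2,2}=0$ and using $\lambda\neq\mu$ forces $a_{1,2}=0$; substituting into the second identity yields $\Delta(x)_{1,2}=-h\,a_{1,2}=0$. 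Together with the three entries handled at the start, this gives $\Delta(x)=0$.

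The main obstacle, and the step that requires real insight rather than routine computation, is selecting the partner $w$: it must simultaneously be of Lemma~\ref{z} type (so that $\Delta(w)=0$ is already known) and share the $(1,2)$-entry of $x$ while differing in the scalar at $(2,1)$, so that the known vanishing of $\Delta(x)_{2,2}$ propagates — through the off-diagonal scalar $\lambda-\mu$ — to the vanishing of $a_{1,2}$ and hence of $\Delta(x)_{1,2}$. The enabling preliminary move is the homogeneity reduction to $\|x_{1,2}\|<1$, without which Lemma~\ref{z} could not be invoked for $w$; the invertibility of $x_{1,1}$ is available but is not needed for this route.
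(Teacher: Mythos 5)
Your proof is correct, and it takes a genuinely different route from the paper's. The paper argues in two cases: for $\lambda=0$ it pairs $x$ with $y=\left(\begin{smallmatrix} e+x_{1,2} & x_{1,2}\\ e & 0\end{smallmatrix}\right)$ (a Lemma~\ref{z} matrix), kills $a_{2,1}$ using the invertibility of $x_{1,1}$ via $0=\Delta(x)_{2,1}=a_{2,1}x_{1,1}$, then kills $a_{1,2}$ from $0=\Delta(y)_{2,2}=a_{2,1}x_{1,2}-a_{1,2}$, and reads off $\Delta(x)_{1,2}=\Delta(y)_{1,2}=0$ because $a$ is now diagonal; the case $\lambda\neq0$ is then reduced to the case $\lambda=0$ through Lemma~\ref{cc}. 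You avoid both the case split and Lemma~\ref{cc} by a single pairing with the partner $w$, chosen to share the $(1,2)$-entry of $x$ but to carry a different scalar $\mu\neq\lambda$ in the $(2,1)$-slot, so that $\Delta(x)=[a,x]=[a,x-w]$ with $x-w=\left(\begin{smallmatrix} h & 0\\ (\lambda-\mu)e & 0\end{smallmatrix}\right)$; I checked the commutator entries, and indeed $[a,x-w]_{2,2}=-(\lambda-\mu)a_{1,2}$ and $[a,x-w]_{1,2}=-ha_{1,2}$, so Lemma~\ref{ss} forces $a_{1,2}=0$ and hence $\Delta(x)_{1,2}=0$. Your argument is shorter and, as you note, never uses the invertibility of $x_{1,1}$, so it proves a strictly stronger statement: the invertibility hypothesis could be dropped from the lemma, and since Lemma~\ref{zzz} invokes invertibility only through the present lemma (the step $0=\Delta(y)_{1,1}-\Delta(x)_{1,1}=a_{1,2}$ there needs nothing further), the stronger version would propagate and even remove the need for the $\mathbf{1}+x$ replacement in Lemma~\ref{mmm}. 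What the paper's route buys in exchange is uniformity of technique: it reuses the cancellation mechanism of Lemma~\ref{cc} and the invertibility device that recurs throughout Lemmata~\ref{z}--\ref{mmm}, at the cost of the two-case structure.
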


\begin{proof} As in the proof of Lemma~\ref{z} it is suffices to show that
$\Delta(x)_{1,2}=0.$  Since $\Delta$ is homogeneous, we can assume
that $\|x_{1,2}\|<1.$

Case 1. Let $\lambda=0.$ Set $y=\left( \begin{array}{cc}
e+x_{1,2} & x_{1,2} \\
e & 0
\end{array} \right).$
Take a matrix $a\in M_2(\mathcal{B})$ such that
\begin{center}
$\Delta(x)=[a, x]$ and $\Delta(y)=[a, y].$
\end{center}
By Lemma~\ref{z} we have that $\Delta(y)=0.$ Since
$$
0=\Delta(x)_{2,1}=a_{2,1} x_{1,1},
$$
and $x_{1,1}$ is invertible in $\mathcal{B},$ it follows that
$a_{2,1}=0.$ From
$$
0=\Delta(y)_{2,2}=a_{2,1}x_{1,2}-a_{1,2},
$$
it follows that $a_{1,2}=0.$ So, $a$ is a diagonal matrix.
This implies that
$$
0=\Delta(y)_{1,2}=a_{1,1}x_{1,2}-x_{1,2}a_{2,2}=\Delta(x)_{1,2},
$$
i.e. $\Delta(x)_{1,2}=0.$ Therefore $\Delta(x)=0.$

Case 2. Let $\lambda\neq 0.$ Set $y=\left( \begin{array}{cc}
x_{1,1} & x_{1,2} \\
0 & 0
\end{array} \right).$
By Case 1,  $\Delta(y)=0.$ From Lemma~\ref{cc} we obtain that
$$
0=\Delta(y)_{1,2}=\Delta(x)_{1,2}.
$$
i.e. $\Delta(x)_{1,2}=0$ , and hence $\Delta(x)=0.$ The proof is
complete.
\end{proof}

\begin{lemma}\label{zzz}
Let $x=\left( \begin{array}{cc}
x_{1,1} & x_{1,2} \\
0 & x_{2,2}
\end{array} \right),$ where $x_{1,1}$ is an invertible element in $\mathcal{B}.$
Then $\Delta(x)=0.$
\end{lemma}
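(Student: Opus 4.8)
The plan is to follow the pattern of the preceding lemmas: first strip off everything that is already forced, reducing the claim to a single off-diagonal entry, and then annihilate that entry by pairing $x$ with an auxiliary matrix whose $2$-local derivation is already known to vanish. As a first step I would record that, by Lemma~\ref{ss}, the diagonal entries $\Delta(x)_{1,1}$ and $\Delta(x)_{2,2}$ are automatically zero, and since $x_{2,1}=0=0\cdot e$, Lemma~\ref{zerro} forces $\Delta(x)_{2,1}=0$. Thus the whole lemma reduces to proving the single identity $\Delta(x)_{1,2}=0$.

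The main device I would use is the auxiliary matrix $y=\left(\begin{smallmatrix} x_{1,1} & x_{1,2}\\ e & 0\end{smallmatrix}\right)$, which has the same first row as $x$ but $(2,1)$-entry $e$ and $(2,2)$-entry $0$. Because $x_{1,1}$ is invertible, Lemma~\ref{zz} applies and gives $\Delta(y)=0$. Invoking the inner derivation property of $M_2(\mathcal{B})$ (Lemma~\ref{idp}), I would choose an element $a=(a_{i,j})$ with $\Delta(x)=[a,x]$ and $\Delta(y)=[a,y]$; since $\Delta(y)=0$, this says $a$ commutes with $y$. Writing out the $(1,1)$- and $(1,2)$-entries of $[a,y]=0$ yields the two relations $a_{1,2}=x_{1,2}a_{2,1}-[a_{1,1},x_{1,1}]$ and $a_{1,1}x_{1,2}=x_{1,1}a_{1,2}+x_{1,2}a_{2,2}$.

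The decisive step is to feed back the a priori information about $\Delta(x)$ itself. Computing the $(1,1)$-entry of $[a,x]=\Delta(x)$ and using $\Delta(x)_{1,1}=0$ gives $[a_{1,1},x_{1,1}]=x_{1,2}a_{2,1}$; substituting this into the first relation above collapses it to $a_{1,2}=0$. Once $a_{1,2}=0$ is in hand, the second relation reads $a_{1,1}x_{1,2}=x_{1,2}a_{2,2}$, and a direct computation of the $(1,2)$-entry of $[a,x]$ leaves only $\Delta(x)_{1,2}=a_{1,1}x_{1,2}-x_{1,2}a_{2,2}$, which is therefore $0$. Combined with the first paragraph, this gives $\Delta(x)=0$.

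I expect the only genuine obstacle here to be conceptual rather than computational, namely the nonzero $(2,2)$-entry $x_{2,2}$. It is exactly what blocks a direct application of Lemma~\ref{zz} (which requires $(2,2)=0$), and it cannot be removed through Lemma~\ref{cc}, since that lemma only establishes independence of $\Delta(x)_{1,2}$ from the $(2,1)$-entry, not from the $(2,2)$-entry. The point of pairing with $y$ (for which $(2,2)=0$, so that $\Delta(y)=0$ is available) together with the vanishing of $\Delta(x)_{1,1}$ is precisely to force $a_{1,2}=0$; once that holds, the potentially troublesome term $a_{1,2}x_{2,2}$ in the $(1,2)$-entry of $[a,x]$ disappears on its own, so that no invertibility of $x_{2,2}$ is ever required.
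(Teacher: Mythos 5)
Your proof is correct and follows essentially the same route as the paper: the same auxiliary matrix $y=\left(\begin{smallmatrix} x_{1,1} & x_{1,2}\\ e & 0\end{smallmatrix}\right)$, Lemma~\ref{zz} to get $\Delta(y)=0$, and a common implementing element $a$ forced to have $a_{1,2}=0$, after which the $(1,2)$-entries of $[a,x]$ and $[a,y]$ coincide. The only cosmetic difference is that the paper obtains $a_{1,2}=0$ in one line from $\Delta(y)_{1,1}-\Delta(x)_{1,1}=a_{1,2}$, whereas you derive the same identity by writing out the two entrywise relations and substituting.
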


\begin{proof} Set $y=\left( \begin{array}{cc}
x_{1,1} & x_{1,2} \\
e & 0
\end{array} \right).$
Take a matrix  $a\in M_2(\mathcal{B})$ such that
\begin{center}
$\Delta(x)=[a, x]$ and $\Delta(y)=[a, y].$
\end{center}
By Lemma~\ref{zz} we have that $\Delta(y)=0.$ Since
$$
0=\Delta(y)_{1,1}-\Delta(x)_{1,1}=a_{1,2},
$$
it follows that $a_{1,2}=0.$  From the last
equality  we obtain that
$$
0=\Delta(y)_{1,2}=a_{1,1}x_{1,2}-x_{1,2}a_{2,2}=\Delta(x)_{1,2},
$$
i.e. $\Delta(x)_{1,2}=0.$  Therefore $\Delta(x)=0.$ The proof is
complete.
\end{proof}

\begin{lemma}\label{mmm} Let $\Delta$ be a 2-local derivation on
$M_2(\mathcal{B})$  such that $$
\Delta|_{\mbox{sp}\{e_{i,j}\}_{i,j=1}^2}\equiv
0,\,\Delta|_{\mathcal{D}_2}\equiv 0.
$$
Then  $\Delta\equiv 0.$
\end{lemma}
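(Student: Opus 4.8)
The plan is to show that every entry of $\Delta(x)$ vanishes. The two diagonal entries are already settled by Lemma~\ref{ss}, which gives $\Delta(x)_{1,1}=\Delta(x)_{2,2}=0$ for all $x$. Moreover, the hypotheses are invariant under the index swap implemented by conjugation with the permutation matrix $P=e_{1,2}+e_{2,1}$: the map $\Delta^P(x)=P\Delta(PxP)P$ is again a $2$-local derivation with $\Delta^P|_{\mbox{sp}\{e_{i,j}\}}\equiv0$ and $\Delta^P|_{\mathcal D_2}\equiv0$, and conjugation interchanges the $(1,2)$- and $(2,1)$-entries. So I would prove only that $\Delta(x)_{1,2}=0$ for every $x$, and then read off $\Delta(x)_{2,1}=0$ by applying that same conclusion to $\Delta^P$.

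To handle the $(1,2)$-entry, I would first reduce to the upper-triangular case. By Lemma~\ref{cc}, $\Delta(x)_{1,2}=\Delta(y)_{1,2}$ with $y=\begin{pmatrix} x_{1,1} & x_{1,2}\\ 0 & x_{2,2}\end{pmatrix}$, so only the first row and the $(2,2)$-entry are relevant. Lemma~\ref{zzz} already covers the case of invertible $x_{1,1}$, so the entire difficulty — and the step I expect to be the main obstacle — is to remove the invertibility hypothesis on $x_{1,1}$. Since $\Delta$ is homogeneous, I may rescale $x$ and assume $\|x_{1,1}\|<1$, so that $e+x_{1,1}$ is invertible in $\mathcal B$ even when $x_{1,1}$ itself is not.

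The device that resolves the obstacle is to compare $y$ with the companion matrix
\[
w=\begin{pmatrix} e+x_{1,1} & x_{1,2}\\ e & 0\end{pmatrix}.
\]
Its $(1,1)$-entry is invertible, so Lemma~\ref{zz} (with $\lambda=1$) yields $\Delta(w)=0$. Invoking $2$-locality, I would choose $a\in M_2(\mathcal B)$ with $\Delta(y)=[a,y]$ and $\Delta(w)=[a,w]=0$, and then compare $(1,1)$-entries in $\Delta(w)-\Delta(y)=[a,w-y]$. The left-hand $(1,1)$-entries both vanish by Lemma~\ref{ss}, while a direct computation gives $[a,w-y]_{1,1}=a_{1,2}$, because the $(2,1)$-entry of $w-y$ is the invertible element $e$; hence $a_{1,2}=0$. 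Once $a_{1,2}=0$, the $(1,2)$-entry of any commutator $[a,M]$ collapses to $a_{1,1}M_{1,2}-M_{1,2}a_{2,2}$, depending only on $M_{1,2}$; as $y$ and $w$ share the $(1,2)$-entry $x_{1,2}$, this forces $\Delta(y)_{1,2}=[a,y]_{1,2}=[a,w]_{1,2}=\Delta(w)_{1,2}=0$.

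Putting the pieces together, $\Delta(x)_{1,2}=\Delta(y)_{1,2}=0$ for every $x$, and the symmetry argument gives $\Delta(x)_{2,1}=0$ as well; combined with Lemma~\ref{ss} this yields $\Delta(x)=0$, that is, $\Delta\equiv0$. The delicate point is precisely the extraction of $a_{1,2}=0$ from the $(1,1)$-entry comparison with the invertible companion $w$: this is what legitimately substitutes for the missing invertibility of $x_{1,1}$, after which the remaining manipulations are routine.
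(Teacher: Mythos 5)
Your proof is correct, and while it follows the paper's overall skeleton (Lemma~\ref{ss} for the diagonal entries, Lemma~\ref{cc} to reduce to the upper-triangular case, the invertible-corner machinery of Lemmata~\ref{zz} and \ref{zzz} for the $(1,2)$-entry), it deviates at the one genuinely delicate step. The paper removes the invertibility hypothesis on $x_{1,1}$ with a one-line $2$-locality trick: choosing $D$ for the pair $(x,\mathbf{1}+x)$ and using $D(\mathbf{1})=0$ gives $\Delta(\mathbf{1}+x)=\Delta(x)$, so after rescaling to $\|x_{1,1}\|<1$ it simply replaces $x$ by $\mathbf{1}+x$ and quotes Lemmata~\ref{zzz} and \ref{cc} verbatim. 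You never use this translation identity; instead you keep $x$ fixed, pass to the upper-triangular $y$, and re-run the comparison mechanism from the proof of Lemma~\ref{zzz} against the shifted companion $w$ whose corner $e+x_{1,1}$ is invertible, extracting $a_{1,2}=0$ from the $(1,1)$-entry of $[a,w-y]$. That computation is right: what makes it work is that $(w-y)_{1,1}=e$ is central and $(w-y)_{1,2}=0$, so $[a,w-y]_{1,1}=a_{1,2}e=a_{1,2}$ — the unit property of the $(2,1)$-entry suffices, invertibility is more than you need. In effect you prove a strengthened Lemma~\ref{zzz} valid for arbitrary $x_{1,1}$ with $\|x_{1,1}\|<1$, at the cost of a few extra lines; the paper's translation trick is slicker but your route is equally valid. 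Your second deviation is a gain in rigor: the paper dismisses the $(2,1)$-entry with ``in a similar way,'' which strictly requires mirrored versions of Lemmata~\ref{cc}--\ref{zzz}, whereas your conjugation $\Delta^P(x)=P\Delta(PxP)P$ with $P=e_{1,2}+e_{2,1}$ is easily checked to be a $2$-local derivation satisfying the same two vanishing hypotheses (since $P$ normalizes both $\mbox{sp}\{e_{i,j}\}_{i,j=1}^2$ and $\mathcal{D}_2$), so the $(1,2)$-conclusion applies to it verbatim and yields $\Delta(x)_{2,1}=0$ with no new computation.
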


\begin{proof}
Let $x=\left( \begin{array}{cc}
x_{1,1} & x_{1,2} \\
x_{2,1} & x_{2,2}
\end{array} \right).$
By Lemma~\ref{ss}, $\Delta(x)_{1,1}=\Delta(x)_{2,2}=0.$

Let us to show that $\Delta(x)_{1,2}=0.$  Since $\Delta$ is
homogeneous we can assume that $\|x_{1,1}\|<1.$ Further, since
  $\Delta(x)=\Delta(\mathbf{1}+x),$ replacing, if necessary,
 $x$ by $\mathbf{1}+x$, we may assume that $x_{1,1}$ is
invertible in $\mathcal{B}.$

Put $y=\left( \begin{array}{cc}
x_{1,1} & x_{1,2} \\
0 & x_{2,2}
\end{array} \right).$
Lemma~\ref{zzz} implies that $\Delta(y)=0.$ Now from
Lemma~\ref{cc} we obtain that
$$
\Delta(x)_{1,2}=\Delta(y)_{1,2}=0.
$$

In a similar way we can show that $\Delta(x)_{2,1}=0.$  The proof
is complete.
\end{proof}

\subsection{The general case}

Now we are in position to prove
Theorem~\ref{mainlocal}.

\begin{proof}[\textit{Proof of Theorem~\ref{mainlocal}}]
Let $\Delta$ be a 2-local derivation on $M_{2^n}(\mathcal{A}),$
where $n\geq 2.$ By Lemma~\ref{lemmatwo}   there exists a
derivation $D$ on $M_{2^n}(\mathcal{A})$ such that
$\Delta|_{\mbox{sp}\{e_{i,j}\}_{i,j=1}^{2^n}}=D|_{\mbox{sp}\{e_{i,j}\}_{i,j=1}^{2^n}}.$
 Replacing, if necessary, $\Delta$ by $\Delta-D,$ we may assume that
$\Delta$ is equal to zero on $\mbox{sp}\{e_{i,j}\}_{i,j=1}^{2^n}.$
Further, by Lemma~\ref{adj}  there exists a diagonal element
$\tilde{a}$ in $M_{2^n}(\mathcal{A})$ such that
$\Delta|_{\mathcal{D}_{2^n}}=\mbox{ad}(\tilde{a})|_{\mathcal{D}_{2^n}}.$
Now replacing $\Delta$ by $\Delta-\mbox{ad}(\tilde{a}),$ we can
assume that $\Delta$ is identically zero on $\mathcal{D}_{2^n}.$ So, we
can assume that
\begin{center}
$\Delta|_{\mbox{sp}\{e_{i,j}\}_{i,j=1}^{2^n}}\equiv 0$ and
$\Delta|_{\mathcal{D}_{2^n}}\equiv 0.$
\end{center}

Let us to show that $\Delta\equiv 0.$ We proceed by induction on
$n.$

Let $n=2.$ We identify  the algebra $M_4(\mathcal{A})$ with the
algebra of $2\times 2$-matrices   $M_2(\mathcal{B}),$ over
$\mathcal{B}=M_2(\mathcal{A}).$

Let $\{e_{i,j}\}_{i,j=1}^4$ be a system of matrix units in
$M_4(\mathcal{A}).$ Then
$$
p_{1,1}=e_{1,1}+e_{2,2},\, p_{2,2}=e_{3,3}+e_{4,4},\,
p_{1,2}=e_{1,3}+e_{2,4},\, p_{2,1}=e_{3,1}+e_{4,2}
$$
is the system of matrix units in $M_2(\mathcal{B}).$ Since
$\Delta|_{\mbox{sp}\{e_{i,j}\}_{i,j=1}^{4}}\equiv 0,$ it follows
that $\Delta|_{\mbox{sp}\{p_{i,j}\}_{i,j=1}^{2}}\equiv 0.$

 Take an arbitrary element $x\in
p_{1,1}M_2(\mathcal{B})p_{1,1}\equiv \mathcal{B}.$ Choose a
derivation $D$ on $M_2(\mathcal{B})$ such that
$$
\Delta(x)=D(x),\, \Delta(p_{1,1})=D(p_{1,1}).
$$
Since  $\Delta(p_{1,1})=0,$ we obtain that
$$
p_{1,1}\Delta(x)p_{1,1}=p_{1,1}D(x)p_{1,1}=D(p_{1,1}
xp_{1,1})-D(p_{1,1})xp_{1,1}-p_{1,1}x D(p_{1,1})=\Delta(x).
$$
This means that the restriction $\Delta_{1,1}$ of $\Delta$ onto
$p_{1,1}M_2(\mathcal{B})p_{1,1}\equiv \mathcal{B}$ maps
$\mathcal{B}=M_2(\mathcal{A})$ into itself.

If $\mathcal{D}_4$ is the subalgebra of diagonal matrices from $M_4(\mathcal{A}),$ then
$p_{1,1}\mathcal{D}_4p_{1,1}$ is the  subalgebra  of diagonal matrices in the algebra
$M_2(\mathcal{A}).$ Since $\Delta|_{\mathcal{D}_{4}}\equiv 0,$ it
follows that $\Delta_{1,1}$ is identically zero on  the diagonal matrices
from  $M_{2}(\mathcal{A}).$ So,
\begin{center}
$\Delta_{1,1}|_{\mbox{sp}\{e_{i,j}\}_{i,j=1}^{2}}\equiv 0$ and
$\Delta_{1,1}|_{p_{1,1}\mathcal{D}_4p_{1,1}}\equiv 0.$
\end{center}
By Lemma~\ref{mmm} it follows that $\Delta_{1,1}\equiv 0.$

Let $\mathcal{D}_2$ be the set of diagonal matrices from  $M_2(\mathcal{B}).$ Since
$$
\mathcal{D}_2=\left( \begin{array}{cc}
\mathcal{B} & 0 \\
0 & \mathcal{B}
\end{array} \right)
$$
and $\Delta_{1,1}=0,$ Lemma~\ref{lemmafour} implies  that
$\Delta|_{\mathcal{D}_2}\equiv 0.$ Hence, $\Delta$ is a 2-local
derivation on $M_2(\mathcal{B})$ such that
\begin{center}
$\Delta|_{\mbox{sp}\{p_{i,j}\}_{i,j=1}^{2}}\equiv 0$ and
$\Delta|_{\mathcal{D}_2}\equiv 0.$
\end{center}
Again by Lemma~\ref{mmm} it
follows that $\Delta\equiv 0.$

Now assume that the assertion of the Theorem is true for $n-1.$

Considering the algebra   $M_{2^n}(\mathcal{A})$ as the
algebra of $2\times 2$-matrices  $M_2(\mathcal{B})$ over
$\mathcal{B}=M_{2^{n-1}}(\mathcal{A})$ and repeating the above
arguments we obtain that  $\Delta\equiv 0.$ The proof is complete.
\end{proof}

The condition on the algebra $\mathcal{A}$ to be a Banach algebra
was applied only for the invertibility of elements of the forms
$\mathbf{1}+x,$ where $x\in \mathcal{A},\, \|x\|<1.$  In this
connection the following question naturally arises.

\begin{problem}
Does  Theorem~\ref{mainlocal} hold for arbitrary (not necessarily normed)
 algebra $\mathcal{A}$ with the inner derivation property?
\end{problem}

\section{An application to $AW^\ast$-algebras}

In this section we apply Theorem~\ref{mainlocal} to the
description of 2-local derivations on $AW^\ast$-algebras.

\begin{theorem}\label{kaplanal}
Let $\mathcal{A}$ be an arbitrary $AW^\ast$-algebra. Then any
2-local derivation $\Delta$ on $\mathcal{A}$ is a derivation.
\end{theorem}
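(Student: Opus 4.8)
The plan is to derive Theorem~\ref{kaplanal} from Theorem~\ref{mainlocal} by reducing, after passing to the central decomposition of $\mathcal{A}$, to a matrix algebra of the required form. First observe that every $AW^\ast$-algebra is a unital $C^\ast$-algebra, hence a unital Banach algebra, and that it is semi-prime: if $a\mathcal{A}a=\{0\}$ then in particular $aa^\ast a=0$, so $(a^\ast a)^2=0$, whence the positive element $a^\ast a$ vanishes and $a=0$. Thus the conditions of Theorem~\ref{mainlocal} on the coefficient algebra will be satisfied as soon as the relevant corner is exhibited as an $AW^\ast$-algebra carrying the inner derivation property.

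Next I would show that $\Delta$ is compatible with the central structure. For any central projection $z$ and any derivation $D$ one has $D(z)=D(z^2)=2zD(z)$, so $zD(z)=0$ and hence $D(z)=0$; consequently $\Delta(z)=0$. Choosing, for each $w\in z\mathcal{A}$, a derivation realizing the pair $(w,z)$ gives $\Delta(w)=z\Delta(w)z\in z\mathcal{A}$, and one checks that the restriction of $\Delta$ to $z\mathcal{A}$ is again a 2-local derivation on the $AW^\ast$-algebra $z\mathcal{A}$. Since every derivation annihilates central projections, each such restriction maps $z\mathcal{A}$ into itself, and it therefore suffices to establish the derivation property on every central summand.

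I would then split $\mathcal{A}$ by Kaplansky's type decomposition. On the abelian summand every derivation of the commutative, semisimple $C^\ast$-algebra is zero, so $\Delta\equiv 0$ there. On each continuous summand and each infinite homogeneous type $I$ summand one may halve twice, producing a system of $2^2\times 2^2$ matrix units and realizing the summand as $M_{2^n}(\mathcal{B})$ with $n\ge 2$, where $\mathcal{B}=p\mathcal{A}p$ is the corner cut by the halving and is itself a unital $AW^\ast$-algebra; invoking the inner derivation property of $\mathcal{B}$, Theorem~\ref{mainlocal} applies and forces $\Delta$ to be a derivation on the summand. The remaining finite homogeneous type $I_k$ summands are of the form $M_k(\mathcal{Z})$ over an abelian $AW^\ast$-algebra $\mathcal{Z}$, and for these I would use the analysis of 2-local derivations on matrix algebras over commutative algebras from \cite{AKA} (cf. the proof of Lemma~\ref{lemmatwo}), together with Lemma~\ref{idp}, which transfers the inner derivation property from $\mathcal{Z}$ to $M_k(\mathcal{Z})$. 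Reassembling the summands and verifying additivity of $\Delta$ componentwise, the remark following \eqref{joor} (additive $2$-local derivations on semi-prime algebras are derivations) yields that $\Delta$ is a derivation on all of $\mathcal{A}$.

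The main obstacle is the structural step: organizing the non-abelian part into matrix form of size a power of two over a corner, and confirming that that corner is an $AW^\ast$-algebra with the inner derivation property. The powers-of-two restriction in Theorem~\ref{mainlocal} is harmless for the continuous and the infinite type $I$ parts, which admit arbitrary halvings, but it forces a separate treatment of the low-rank finite homogeneous pieces (for instance type $I_2$ and $I_3$), which cannot be halved; handling these uniformly, and pinning down the inner derivation property in the full generality of arbitrary $AW^\ast$-algebras, is where the genuine work lies.
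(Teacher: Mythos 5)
Your proposal is correct and follows essentially the same route as the paper: central invariance of $\Delta$, Kaplansky's type decomposition, Lemma~\ref{lemmatwo} applied to the finite homogeneous type I$_k$ summands $M_k(Z(\mathcal{A}))$ (with $Z(\mathcal{A})$ commutative the lemma gives agreement with a derivation on the whole algebra, sidestepping the powers-of-two restriction exactly as you anticipate), and the halving lemma realizing the I$_\infty$, II and III summands as $M_4(\mathcal{B})$ over a corner $\mathcal{B}=e_1\mathcal{A}e_1$ so that Theorem~\ref{mainlocal} applies. The only ingredient you flag as ``genuine work'' --- the inner derivation property for arbitrary $AW^\ast$-algebras, hence for the corner $\mathcal{B}$ --- is precisely Olesen's theorem \cite{Ole}, which the paper simply cites.
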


\begin{proof}
Let us first note that any $AW^\ast$-algebra is semi-prime.
It is also known \cite{Ole} that $AW^\ast$-algebra has the inner
derivation property.

Let $z$ be a central projection in $\mathcal{A}.$ Since $D(z)=0$
for an arbitrary derivation $D,$ it is clear that $\Delta(z)=0$
for any $2$-local derivation $\Delta$ on $\mathcal{A}.$ Take $x\in
\mathcal{A}$ and let $D$ be a derivation on $\mathcal{A}$ such
that $\Delta(z x)=D(z x), \Delta(x)=D(x)$. Then we have $\Delta(z
x)=D(z x)=D(z)x+zD(x)=z\Delta(x).$ This means that every 2-local
derivation $\Delta$ maps $z\mathcal{A}$ into $z\mathcal{A}$ for
each central projection $z\in \mathcal{A}.$ So, we may consider
the restriction of $\Delta$ onto $e\mathcal{A}.$ Since an
arbitrary  $AW^\ast$-algebra can be decomposed along a central
projection into the direct sum of an abelian $AW^\ast$-algebra,  and $AW^\ast$-algebras
 of  type I$_n,n\geq2,$ type I$_\infty,$, type II and type III , we may consider these cases separately.

Let $\mathcal{A}$ be an abelian $AW^\ast$-algebra. It is
well-known that any derivation on a such algebra is identically
zero. Therefore any 2-local derivation on an abelian
$AW^\ast$-algebra is also identically zero.

If  $\mathcal{A}$ is an  $AW^\ast$-algebra of  type I$_n,$
$n\geq2,$ with the center $Z(\mathcal{A}),$ then it is isomorphic
to the algebra $M_n(Z(\mathcal{A})).$ By Lemma~\ref{lemmatwo}
there exists a derivation $D$ on $\mathcal{A}\equiv
M_n(Z(\mathcal{A}))$ such that $\Delta\equiv D.$ So,  $\Delta$ is
a derivation.

Let  the $AW^\ast$-algebra $\mathcal{A}$ have one of the types
I$_\infty,$ II or III. Then  the halving Lemma \cite[Lemma
4.5]{Kap51} for type I$_\infty$-algebras and \cite[Lemma
4.12]{Kap51} for type II or III algebras, imply that the unit of
the algebra $\mathcal{A}$ can be represented as a sum of mutually
equivalent orthogonal projections $e_1, e_2, e_3, e_4$ from
$\mathcal{A}.$ Then the map $x\mapsto \sum\limits_{i,j=1}^4
e_ixe_j$ defines an  isomorphism between the algebra $\mathcal{A}$ and
the matrix algebra $M_4(\mathcal{B}),$ where
$\mathcal{B}=e_{1,1}\mathcal{A}e_{1,1}.$ Therefore
Theorem~\ref{mainlocal} implies that any 2-local derivation on
$\mathcal{A}$ is a derivation. The proof is complete.
\end{proof}

\end{document}